\def\XXint#1#2#3{{\setbox0=\hbox{$#1{#2#3}{\int}$ }
\vcenter{\hbox{$#2#3$ }}\kern-.6\wd0}}
      \newcommand{\rn}{\R^N}
      \newtheorem{theorem}{Theorem}[section]
      \newtheorem{remark}[theorem]{Remark}
      \newtheorem{lemma}[theorem]{Lemma}
      \newcommand{\ct}[1]{\langle {#1}\rangle \lower.3ex\hbox{$_{t}$}}
      \newcommand{\lt}[1]{[ {#1}] \lower.3ex\hbox{$_{t}$}}
\def\rn{\mathbb R^n}
\def\bn{\mathbb R^{1+n}_+}
\theoremstyle{definition}
\newtheorem{definition}[theorem]{Definition}
\begin{document}

\title[Homothetic variant of fractional Sobolev space ... revisited]{Homothetic variant of fractional Sobolev space with application to
Navier-Stokes system revisited}

\author{Jie Xiao}
%    Address of record for the research reported here
\address{Department of Mathematics and Statistics, Memorial University of Newfoundland, St. John's, NL A1C 5S7, Canada}
\email{jxiao@mun.ca}
\thanks{JX was in part supported by NSERC of Canada and URP of Memorial University.}

%    General info
\subjclass[2010]{{31C15, 35Q30, 42B37, 46E35}\\
{\it Key words and phrases.}\ {$\{Q_\alpha^{-1}\}_{0\le\alpha<1}$, $\lim_{\alpha\to 1}Q_\alpha^{-1}$, Navier-Stokes equations}}
\date{}

%\dedicatory{In memory of Adriano M. Garsia 1928-2010}

\dedicatory{}

\keywords{}

\begin{abstract} 
This note provides a deeper understanding of the main results obtained in the author's 2007 DPDE paper \cite{Xiao}. 
\end{abstract}
\maketitle

\tableofcontents

%\section{Introduction}\label{s1}
%\setcounter{equation}{0}

\section{Introduction}\label{s1}
\setcounter{equation}{0}

This note is devoted to a further understanding of the results on the so-called Q-spaces on $\rn$ and the incompressible Navier-Stokes equations on $\mathbb R^{1+n}_+=(0,\infty)\times\rn$ established in the author's 2007 DPDE paper \cite{Xiao}. 

For $\alpha\in (-\infty,\infty)$, the space $Q_\alpha$ on $\mathbb R^n$ is defined as the class of all measurable complex-valued functions $f$ on $\mathbb R^n$ with
\begin{equation}
\label{eQ}
\||f\||_{Q_\alpha}=\sup_{(r,x)\in \mathbb R^{1+n}_+}\left(r^{2\alpha-n}\iint_{B(x,r)\times B(x,r)}\frac{|f(y)-f(z)|^2}{|y-z|^{n+2\alpha}}\,dydz\right)^\frac12<\infty.
\end{equation}
Here and henceforth, $B(x,r)\subseteq\mathbb R^n$ stands for the open ball centered at $x$ with radius $r$.  

This space exists as a homothetic variant of the fractional Sobolev space $\dot{L}^2_\alpha$ on $\mathbb R^n$, where
$$
f\in\dot{L}^2_\alpha\Longleftrightarrow\iint_{\rn\times \rn}\frac{|f(y)-f(z)|^2}{|y-z|^{n+2\alpha}}\,dydz<\infty.
$$
According to \cite{EsJPX, Xiao}, $\big({Q}_\alpha/\mathbb C,\||f\||_{{Q}_\alpha}\big)$ is not only a Banach space, but also affine invariant: if $(\lambda,x_0)\in\mathbb R^{1+n}_+$ then 
$$
\phi(x)=\lambda x+x_0\Rightarrow\||f\circ\phi\||_{Q_\alpha}=\||f\||_{Q_\alpha}.
$$ 
Interestingly, one has the following structure:
$$
Q_\alpha=\begin{cases}
BMO\ \hbox{as}\ \alpha\in (-\infty,0);\\
(-\Delta)^{-\frac{\alpha}{2}}\mathcal{L}_{2,n-2\alpha}\ \hbox{between}\ W^{1,n}\ and\ BMO \ \hbox{as}\ \alpha\in (0,1);\\
\mathbb C\ \hbox{as}\ \alpha\in [1,\infty),
\end{cases}
$$
where $(-\Delta)^{-{\alpha}/{2}}$ stands for the $-\alpha/2$-th power of the Laplacian operator, and
$$
\begin{cases} 
f\in\mathcal{L}_{2,n-2\alpha}\Longleftrightarrow\||f\||^2_{\mathcal{L}_{2,n-2\alpha}}=\sup_{(r,x)\in\mathbb R^{1+n}_+}r^{2(\alpha-n)}\iint_{B(x,r)\times B(x,r)}|f(y)-f(z)|^2\,dydz<\infty;\\
f\in W^{1,n}\Longleftrightarrow \||f\||_{W^{1,n}}^n=\int_{\mathbb R^n}|\nabla f(x)|^n\,dx<\infty;\\
f\in BMO\Longleftrightarrow
\||f\||_{BMO}^2=\sup_{(r,x)\in\mathbb R^{1+n}_+} r^{-2n}\iint_{B(x,r)\times B(x,r)}|f(y)-f(z)|^2\,dydz<\infty.
\end{cases}
$$
As showed in \cite{Xiao}, the importance of the structure lies in an application of $Q_\alpha$ to treating the existence and uniqueness of the so-called mild solution
$u=u(t,x)=(u_1(t,x),...,u_n(t,x))$ 
of the normalized incompressible Navier-Stokes system with the pressure function $p=p(t,x)$ and the initial data $a=a(x)=(a_1(x),...,a_n(x))$ below
\begin{equation}\label{e181}
\left\{\begin{array}{r@{}l}
\partial_t u-\Delta u+u\cdot\nabla u+\nabla p=0\ \ \hbox{on}\ \ \mathbb R^{1+n}_+;\\
\nabla\cdot u=0\ \ \hbox{on}\ \ \mathbb R^n;\\
u(0,\cdot)=a(\cdot)\ \ \hbox{on}\ \ \mathbb R^n,
\end{array}
\right.
\end{equation}
namely, $u$ solves the integral equation
\begin{equation}
\label{eIe}
u(t,x)=e^{t\Delta}a(x)-\int_0^t e^{(t-s)\Delta}P\nabla\cdot(u\otimes u)ds,
\end{equation}
where 
$$
\left\{\begin{array}{r@{}l}
e^{t\Delta}a(x)=(e^{t\Delta}a_1(x),...,e^{t\Delta}a_n(x));\\
P=\{P_{jk}\}_{j,k=1,...,n}=\{\delta_{jk}+R_jR_k\}_{j,k=1,...,n};\\
\delta_{jk}=\hbox{Kronecker\ symbol};\\
R_j=\partial_j(-\Delta)^{-\frac12}=\hbox{Riesz\ transform}.
\end{array}
\right.
$$

Even more interestingly, several relevant advances were made in \cite{M, QLi, KXZZ, GJLV, LiZh, LiYa, Lem1, Lem2, LiXiYa}. The principal results in these papers have strongly inspired the author to revisit and optimize the main results in \cite{Xiao}. The present article is divided into the following two sections between this Introduction and the References at the end:

\medskip

\ref{s182}.\ \ \ $\{Q^{-1}_\alpha\}_{0\le\alpha<1}$ and its Navier-Stokes equations;

\ref{s183}.\ \ \ $\lim_{\alpha\to 1}Q^{-1}_\alpha$ and its Navier-Stokes equations.

\medskip

\noindent{\it Notation}. $U\lesssim V$ or $V\gtrsim U$ stands for $U\le C V$ for a constant $C>0$ independent of $U$ and $V$; $U\approx V$ is used for both $U\lesssim V$ and $V\lesssim U$.

\section{$\big\{Q_{\alpha}^{-1}\big\}_{0\le\alpha<1}$ and its Navier-Stokes equations}\label{s182} 
\setcounter{equation}{0}

\subsection{$\big\{(-\Delta)^{-\alpha/2}\mathcal L_{2,n-2\alpha}\big\}_{0\le\alpha<1}\ \&\ \big\{Q_{\alpha}^{-1}\big\}_{0\le\alpha<1}$}\label{s182a}

As an extension of the John-Nirenberg's $BMO$-space \cite{JN}, the $Q$-space $Q_\alpha{}$ was studied first in \cite{EsJPX}, and then in \cite{DX1,DX2}. Among several characterizations of $Q_\alpha{}$, the following, as a variant of \cite[Theorem 3.3]{DX1} (expanding Fefferman-Stein's basic result for $BMO=(-\Delta)^{-0}\mathcal L_{2,n}$ in \cite{FS}), is of independent interest: given $\alpha\in [0,1)$ and a $C^\infty$ function $\psi$ on $\mathbb R^n$ with

\begin{equation}\label{e182}
\left\{\begin{array}{r@{}l}
\psi\in L^1{};\\
|\psi(x)|\lesssim (1+|x|)^{-(n+1)}\ \ \hbox{for}\ \ x\in\mathbb R^{n};\\
\int_{\rn}\psi(x)dx=0;\\
\psi_t(x)=t^{-n}{\psi(\frac{x}{t})}\ \ \hbox{for}\ \ (t,x)\in\bn,
\end{array}
\right.
\end{equation}
one has:
\begin{equation}\label{e183}
f\in (-\Delta)^{-\alpha/2}\mathcal L_{2,n-2\alpha} \Longleftrightarrow\sup_{(r,x\in\bn}r^{2\alpha-n}\int_0^r\Big(\int_{B(x,r)}{|f\ast\psi_t(y)|^2}\,dy\Big){t^{-1-2\alpha}}\,dt<\infty.
\end{equation}
Obviously, $\ast$ stands for the convolution operating on the space variable and
$$
(-\Delta)^{-\alpha/2}\mathcal L_{2,n-2\alpha}=\begin{cases} BMO\ \hbox{for}\ \alpha=0;\\
Q_\alpha\ \hbox{for}\ \alpha\in (0,1).
\end{cases}
$$ 
Upon choosing four $\psi$-functions in (\ref{e182})-(\ref{e183}), we can get four
descriptions of $(-\Delta)^{-\alpha/2}\mathcal L_{2,n-2\alpha}$ involving the Poisson and heat semi-groups. To see this, denote by
$e^{-t\sqrt{-\Delta}}(\cdot,\cdot)$ and $e^{t\Delta}(\cdot,\cdot)$
the Poisson and heat kernels respectively:
$$
\left\{\begin{array}{r@{}l}
e^{-t\sqrt{-\Delta}}(x,y)={\Gamma\big(\frac{n+1}{2}\big)}{\pi^{-\frac{n+1}{2}}}t{(|x-y|^2+t^2)^{-\frac{n+1}{2}}};\\
e^{t\Delta}(x,y)
=(4\pi t)^{-\frac{n}{2}}{\exp\big(-\frac{|x-y|^2}{4t}\big)}.
\end{array}
\right.
$$
And, for $\beta\in (-\infty,\infty)$ the notation $(-\Delta)^{\frac{\beta}{2}} f$, determined by the Fourier transform $\widehat{(\cdot)}$:
$\widehat{(-\Delta)^{\frac{\beta}{2}}f}(x)=(2\pi|x|)^\beta\hat{f}(x),$
represents the $\beta/2$-th power of the Laplacian 
$$
-\Delta f=-\Delta_x f=-\sum_{j=1}^n\partial_j^2 f=-\sum_{j=1}^n\frac{\partial^2 f}{\partial x_j^2}.
$$

{\it Choice 1}: If
$$
\begin{cases}
\psi_{1,0}(x)=\Big({1+|x|^2-(n+1)\Gamma\big(\frac{n+1}{2}\big){\pi^{-\frac{n+1}{2}}}}\Big){(1+|x|^2)^{-\frac{n+3}{2}}};\\
(\psi_{1,0})_t(x)=t{\partial_t}e^{-t\sqrt{-\Delta}}(x,0),
\end{cases}
$$
then
\begin{equation*}\label{e184}
f\in 
(-\Delta)^{-\alpha/2}\mathcal L_{2,n-2\alpha}\Longleftrightarrow\sup_{(r,x)\in\bn}r^{2\alpha-n}{\int_0^r\big(\int_{B(x,r)}
{|{\partial_t}e^{-t\sqrt{-\Delta}}f(y)|^2}\,dy\big){t^{1-2\alpha}}dt}<\infty.
\end{equation*}

{\it Choice 2}: If
$$
\begin{cases}
\psi_{1,j}(x)=-(n+1)\Gamma\big(\frac{n+1}{2}\big){\pi^{-\frac{n+1}{2}}}{(1+|x|^2)^{-\frac{n+3}{2}}};\\
(\psi_{1,j})_t(x)=t{\partial_j}e^{-t\sqrt{-\Delta}}(x,0),
\end{cases}
$$
then
\begin{equation*}\label{e184a}
f\in (-\Delta)^{-\alpha/2}\mathcal L_{2,n-2\alpha}\Longleftrightarrow\sup_{(r,x)\in\bn}{r^{2\alpha-n}\int_0^r\big(\int_{B(x,r)}
{|{\nabla_y}e^{-t\sqrt{-\Delta}}f(y)|^2}\,dy\big){t^{1-2\alpha}}dt}<\infty,
\end{equation*}
where $\nabla_y$ is the gradient with respect to the space variable $y=(y_1,...,y_n)\in\mathbb R^n$.

{\it Choice 3}: If
$$
\begin{cases}
\psi_{2,0}(x)=-(4\pi)^{-\frac n2}\Big(n-\frac{|x|^2}{2}\Big)\exp\Big(-\frac{|x|^2}{4}\Big);\\
(\psi_{2,0})_t(x)=t\partial_t e^{t^2\Delta}(x,0),
\end{cases}
$$
then
\begin{equation*}\label{e1851}
f\in (-\Delta)^{-\alpha/2}\mathcal L_{2,n-2\alpha}\Longleftrightarrow\sup_{(r,x)\in\bn}r^{2\alpha-n}{\int_0^r\big(\int_{B(x,r)}|\partial_t e^{t^2\Delta}f(y)|^2\,dy\big){t^{1-2\alpha}}dt}<\infty.
\end{equation*}

{\it Choice 4}: If
$$
\begin{cases}
\psi_{2,j}(x)=-(4\pi)^{-\frac n2}\Big(\frac{x_j}{2}\Big)\exp\Big(-\frac{|x|^2}{4}\Big);\\
(\psi_{2,j})_t(x)=t{\partial_j}e^{t^2\Delta}(x,0),
\end{cases}
$$
then
\begin{equation*}\label{e185}
f\in (-\Delta)^{-\alpha/2}\mathcal L_{2,n-2\alpha}\Longleftrightarrow\sup_{(r,x)\in\bn}r^{2\alpha-n}{\int_0^r\big(\int_{B(x,r)}|\nabla_y
e^{t^2\Delta}f(y)|^2\,dy\big){t^{1-2\alpha}}dt}<\infty.
\end{equation*}

The previous characterizations lead to the following assertion uniting \cite[Theorem 1.2 (iii)]{Xiao} and the corresponding result on $BMO^{-1}$ in \cite{KoTa}.

\begin{theorem}\label{t184} For $\alpha\in [0,1)$ let $Q_{\alpha}^{-1}=((-\Delta)^{-\alpha/2}\mathcal L_{2,n-2\alpha})^{-1}$ be the class of all functions $f$ on $\rn$ with
\begin{equation}
\label{eQ-1}
\|f\|_{Q_{\alpha}^{-1}{}}=\sup_{(r,x)\in\bn}\left(r^{2\alpha-n}{\int_0^{r^2}\big(\int_{B(x,r)}|e^{t\Delta}f(y)|^2\,dy\big)\,t^{-\alpha}dt}\right)^\frac12<\infty,
\end{equation}
then 
\begin{equation}\label{e1810}
\nabla\cdot\big(Q_\alpha{}\big)^n=\hbox{div}\big(Q_\alpha{}\big)^n=Q_{\alpha}^{-1}{}.
\end{equation}
Consequently,
\begin{equation}
\label{e1811}
0\le \alpha_1<\alpha_2<1\Longrightarrow Q_{\alpha_2}^{-1}{}\subseteq Q_{\alpha_1}^{-1}{}.
\end{equation}
\end{theorem}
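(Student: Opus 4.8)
The plan is to recast both $Q_\alpha$ and $Q_\alpha^{-1}$ as Carleson-type conditions on heat extensions that differ only by a spatial gradient, and then prove the two inclusions comprising \eqnref{e1810}. In {\it Choice 4} substitute $\tau=t^2$: this turns $t^{1-2\alpha}\,dt$ into a constant multiple of $\tau^{-\alpha}\,d\tau$, replaces $\int_0^r$ by $\int_0^{r^2}$ and $e^{t^2\Delta}$ by $e^{\tau\Delta}$, and (the characterization \eqnref{e183} being as usual a norm equivalence) yields the quantitative description
\begin{equation*}
f\in Q_\alpha\iff \|f\|_{Q_\alpha}^2\approx\sup_{(r,x)\in\bn}r^{2\alpha-n}\int_0^{r^2}\Big(\int_{B(x,r)}|\nabla_y e^{\tau\Delta}f(y)|^2\,dy\Big)\tau^{-\alpha}\,d\tau,
\end{equation*}
which is exactly \eqnref{eQ-1} with an extra $\nabla_y$ inserted (here $Q_\alpha$ means $(-\Delta)^{-\alpha/2}\mathcal L_{2,n-2\alpha}$, i.e. $BMO$ when $\alpha=0$). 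So \eqnref{e1810} reduces to proving (i) $\nabla\cdot(Q_\alpha)^n\subseteq Q_\alpha^{-1}$ and (ii) $Q_\alpha^{-1}\subseteq\nabla\cdot(Q_\alpha)^n$.

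For (i), let $g=(g_1,\dots,g_n)$ with each $g_j\in Q_\alpha$ and $f=\nabla\cdot g$. Since $e^{\tau\Delta}$ commutes with $\partial_j$, one has $e^{\tau\Delta}f=\sum_{j=1}^n\partial_{y_j}e^{\tau\Delta}g_j$, hence the pointwise bound $|e^{\tau\Delta}f(y)|^2\le n\sum_{j=1}^n|\nabla_y e^{\tau\Delta}g_j(y)|^2$. Inserting this into \eqnref{eQ-1}, integrating over the Carleson box $\{0<\tau<r^2\}\times B(x,r)$ against $\tau^{-\alpha}\,d\tau\,dy$ and taking the supremum, the above description of $Q_\alpha$ gives $\|f\|_{Q_\alpha^{-1}}^2\lesssim\sum_{j}\|g_j\|_{Q_\alpha}^2<\infty$, so $f\in Q_\alpha^{-1}$.

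For (ii) --- the substantive direction --- given $f\in Q_\alpha^{-1}$ take $g=(g_1,\dots,g_n)$ with $g_j=-\partial_j(-\Delta)^{-1}f=-\int_0^\infty\partial_j e^{\sigma\Delta}f\,d\sigma$, so that $\widehat{g_j}(\xi)=-\tfrac{i\xi_j}{2\pi|\xi|^2}\widehat f(\xi)$ and hence $\nabla\cdot g=f$; it remains to check $g_j\in Q_\alpha$, i.e. the Carleson bound above for $\nabla_y e^{\tau\Delta}g_j$. By the semigroup law, $\nabla_y e^{\tau\Delta}g_j(y)=-\int_\tau^\infty\nabla_y\partial_j e^{\rho\Delta}f(y)\,d\rho$; writing $e^{\rho\Delta}=e^{(\rho/2)\Delta}e^{(\rho/2)\Delta}$ and using the Gaussian bound $|\nabla_z\partial_j e^{(\rho/2)\Delta}(z,0)|\lesssim\rho^{-(n+2)/2}e^{-c|z|^2/\rho}$, Minkowski's inequality in $\rho$ reduces matters to estimating, with weight $\rho^{-1}\,d\rho$, the $L^2(B(x,r))$-norm of the Gaussian average at scale $\sqrt\rho$ of $|e^{(\rho/2)\Delta}f|$. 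Decomposing $\rn$ into the dyadic shells $2^{l+1}B(x,r)\setminus 2^l B(x,r)$ produces the off-diagonal gain $e^{-c4^l r^2/\rho}$ from the Gaussian kernel, while $\|e^{(\rho/2)\Delta}f\|_{L^2(2^{l+1}B(x,r))}$ is controlled by the $Q_\alpha^{-1}$-condition applied at scale $\max\{2^{l+1}r,\sqrt\rho\}$ --- after averaging in time over an interval of length $\approx\rho$ and using the Gaussian localization of $e^{s\Delta}$ to pass between nearby times and scales. The ensuing $\tau$-, $\rho$- and $l$-integrations converge precisely because $0\le\alpha<1$, and give $\|g_j\|_{Q_\alpha}\lesssim\|f\|_{Q_\alpha^{-1}}$. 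The real obstacle is exactly this estimate: the second-order operators $\partial_k\partial_j(-\Delta)^{-1}$ implicit here are nonlocal, so a naive $L^2(B(x,r))$-boundedness is useless and one must instead exploit the dilation structure of the $Q_\alpha$-conditions to tame the contribution of scales $\rho$ far exceeding the Carleson radius $r$. (A minor companion point: the integral defining $g_j$ converges in $L^2_{\mathrm{loc}}$ for $\alpha\in(0,1)$ but only modulo constants for $\alpha=0$, in which case $\nabla\cdot g=f$ is read in $\mathcal S'$ modulo constants --- consistently with the $BMO^{-1}$ case of \cite{KoTa}; for $\alpha\in(0,1)$ one recovers \cite[Theorem 1.2 (iii)]{Xiao}, so Theorem \ref{t184} indeed unifies the two.)

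Finally, \eqnref{e1811} follows directly from \eqnref{eQ-1}: for $0<\tau\le r^2$ and $0\le\alpha_1<\alpha_2<1$ one has $\tau^{-\alpha_1}=\tau^{\alpha_2-\alpha_1}\tau^{-\alpha_2}\le r^{2(\alpha_2-\alpha_1)}\tau^{-\alpha_2}$, whence
\begin{equation*}
r^{2\alpha_1-n}\int_0^{r^2}\Big(\int_{B(x,r)}|e^{\tau\Delta}f(y)|^2\,dy\Big)\tau^{-\alpha_1}\,d\tau\le r^{2\alpha_2-n}\int_0^{r^2}\Big(\int_{B(x,r)}|e^{\tau\Delta}f(y)|^2\,dy\Big)\tau^{-\alpha_2}\,d\tau;
\end{equation*}
taking the supremum over $(r,x)\in\bn$ gives $\|f\|_{Q_{\alpha_1}^{-1}}\le\|f\|_{Q_{\alpha_2}^{-1}}$, i.e. $Q_{\alpha_2}^{-1}\subseteq Q_{\alpha_1}^{-1}$. (Equivalently, this is immediate from \eqnref{e1810} and the known nesting $Q_{\alpha_2}\subseteq Q_{\alpha_1}$.)
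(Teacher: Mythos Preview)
Your forward inclusion $\nabla\cdot(Q_\alpha)^n\subseteq Q_\alpha^{-1}$ and your direct derivation of \eqnref{e1811} from \eqnref{eQ-1} are fine; the latter is in fact cleaner than the paper's route through the nesting $Q_{\alpha_2}\subseteq Q_{\alpha_1}$.

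The gap is in the reverse inclusion. You correctly set $g_j=-\partial_j(-\Delta)^{-1}f$ and correctly identify that the whole game is to show $\partial_k\partial_j(-\Delta)^{-1}$ preserves the $Q_\alpha^{-1}$-Carleson condition. But the sentence ``the ensuing $\tau$-, $\rho$- and $l$-integrations converge precisely because $0\le\alpha<1$'' is an assertion, not a proof. Concretely: after Minkowski in $\rho$ you are left with $\int_\tau^\infty\rho^{-1}\|\,\cdot\,\|_{L^2(B(x,r))}\,d\rho$ with a weight $\rho^{-1}$ that is \emph{not} integrable near either endpoint, so the convergence you claim must come from quantitative decay of the $L^2$ factor in both regimes $\rho\ll r^2$ and $\rho\gg r^2$. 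For the large-$\rho$ regime you need an $L^\infty$-type bound on $e^{(\rho/2)\Delta}f$, and the only way to extract one from the hypothesis $f\in Q_\alpha^{-1}$ is through the chain $Q_\alpha^{-1}\subseteq BMO^{-1}\subseteq\dot B^{-1,\infty}_\infty$, which you never invoke. For the small-$\rho$ regime your ``averaging in time over an interval of length $\approx\rho$'' is gestural: the $Q_\alpha^{-1}$ norm controls $\int_0^{R^2}\|e^{t\Delta}f\|_{L^2(B)}^2t^{-\alpha}\,dt$, not a single time slice, and passing from one to the other costs exactly the power of $\rho$ you need to make the $\rho$-integral converge --- so the computation must be done, not waved at.

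The paper's argument is organized differently and avoids the $\rho$-integral altogether. It works at the level of $e^{t\Delta}f_{j,k}$ with $f_{j,k}=\partial_j\partial_k(-\Delta)^{-1}f$ and splits this via a spatial mollifier $\phi_r$ into a smooth global piece $g_r$ and a rough local piece $f_r$. The global piece is handled by the Besov duality $\|\phi_r\ast h\|_{L^\infty}\le\|h\|_{\dot B^{-1,\infty}_\infty}\|\phi_r\|_{\dot B^{1,1}_1}\lesssim r^{-1}\|f\|_{\dot B^{-1,\infty}_\infty}$ (this is where $Q_\alpha^{-1}\subseteq\dot B^{-1,\infty}_\infty$ enters explicitly). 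The local piece is split again by a cutoff $\psi_{r,x}$: on the support of $\psi_{r,x}$ Plancherel and the $L^2$-boundedness of $\partial_j\partial_k(-\Delta)^{-1}$ suffice; off the support one uses a pointwise off-diagonal bound of Lemari\'e--Rieusset and sums over dyadic annuli. Each of the three pieces is bounded separately by $r^{n-2\alpha}\|f\|_{Q_\alpha^{-1}}^2$. Your heat-kernel representation of $(-\Delta)^{-1}$ could in principle replace this, but you would still need the Besov embedding (or an equivalent $L^\infty$ input) for the tail, and you have not supplied it.
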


\begin{proof} The argument below, taken essentially from the proofs of \cite[Lemma 2.2 and Theorem 1.2 (ii)]{Xiao}, is valid for all $\alpha\in [0,1)$.

{\it Step 1}. We prove
$$
f_{j,k}={\partial_j\partial_k} (-\Delta)^{-1}f\ \ \&\ \ f\in
Q_{\alpha}^{-1}{}\Longrightarrow f_{j,k}\in Q^{-1}_{\alpha}{}\ \ \hbox{for}\ \ j,k=1,2,...n.
$$

Taking a $C^\infty_0{}$ function $\phi$ with
$$
\left\{\begin{array}{r@{}l}
\hbox{supp}\phi\subset B(0,1);\\
\int_{\rn}\phi(x)dx=1;\\
\phi_r(x)=r^{-n}\phi(x/r);\\
g_r(t,x)=\phi_r\ast\partial_j\partial_k
(-\Delta)^{-1}e^{t\Delta}f(x),
\end{array}
\right.
$$
we get 
$$
e^{t\Delta}f_{j,k}(x)=\partial_j\partial_k (-\Delta)^{-1}e^{t\Delta}f(x)=f_r(t,x)+g_r(t,x).
$$
Upon denoting by $\dot{B}^{1,1}_1{}$ the predual of the homogeneous Besov space $\dot{B}^{-1,\infty}_\infty{}$ (consisting of all functions $f$ on $\rn$ with $\|e^{t\Delta}f\|_{L^\infty{}}\lesssim t^{-1/2}$), we find (cf. \cite[p. 160, Lemma 16.1]{Lem}) 

$$
f\in Q^{-1}_{\alpha}\Longrightarrow f\in BMO^{-1}{}\subseteq\dot{B}^{-1,\infty}_\infty{}\Longrightarrow
\|g_r(t,\cdot)\|_{L^\infty{}}\le{\big\|\partial_j\partial_k (-\Delta)^{-1}e^{t\Delta}f\big\|_{\dot{B}^{-1,\infty}_\infty{}}}
{\|\phi_r\|_{\dot{B}^{1,1}_1{}}}\lesssim{r^{-1}\|f\|_{\dot{B}^{-1,\infty}_\infty{}}},
$$
thereby reaching
\begin{equation}\label{e1812}
\int_0^{r^2}\Big(\int_{B(x,r)}|g_r(t,y)|^2\,dy\Big)t^{-\alpha}dt\lesssim
r^{n-2\alpha}\|f\|^2_{\dot{B}^{-1,\infty}_\infty}\lesssim
r^{n-2\alpha}\|f\|^2_{Q^{-1}_{\alpha}{}}.
\end{equation}

Next, taking another $C^\infty_0{}$ function $\psi$ with $\psi=1$ on
$B(0,10)$, writing
$$
\left\{\begin{array}{r@{}l}
\psi_{r,x}=\psi\big(\frac{y-x}{r}\big);\\
f_r=F_{r,x}+G_{r,x};\\
G_{r,x}={\partial_j\partial_k}(-\Delta)^{-1}\psi_{r,x}e^{t\Delta}f-\phi_r\ast{\partial_j\partial_k}(-\Delta)^{-1}\psi_{r,x}e^{t\Delta}f,
\end{array}
\right.
$$
and employing the Plancherel formula for the space variable, we find out
\begin{align*}
\int_0^{r^2}\big\|{\partial_j\partial_k}(-\Delta)^{-1}\psi_{r,x}e^{t\Delta}f\big\|_{L^2{}}^2t^{-\alpha}dt&\lesssim\int_0^{r^2}\Big(\int_{\rn}\big|y_jy_k|y|^{-2}\widehat{\big(\psi_{r,x}e^{t\Delta}f\big)}(y)\big|^2 dy\Big)t^{-\alpha}dt\\
&\lesssim\int_0^{r^2}\big\|{\psi_{r,x}e^{t\Delta}f}\big\|_{L^2{}}^2t^{-\alpha}dt.
\end{align*}
At the same time, using Minkowski's inequality (for $\phi_r$) and the Plancherel formula once again, we read off
$$
\int_0^{r^2}\big\|\phi_r\ast{\partial_j\partial_k} (-\Delta)^{-1}\psi_{r,x}e^{t\Delta}f\big\|_{L^2{}}^2t^{-\alpha}dt\lesssim\int_0^{r^2}\big\|{\psi_{r,x}e^{t\Delta}f}\|_{L^2{}}^2t^{-\alpha}dt.
$$
Consequently
$$
\int_0^{r^2}\big\|G_{r,x}(t,\cdot)\big\|_{L^2{}}^2t^{-\alpha}dt\lesssim\int_0^{r^2}\big\|{\psi_{r,x}e^{t\Delta}f}\big\|_{L^2{}}^2t^{-\alpha}dt.
$$
To handle $F_{r,x}$, we apply the following inequality (cf. \cite[p. 161]{Lem})
$$
\int_{B(x,r)}|F_{r,x}(t,y)|^2\,dy\lesssim r^{n+1}\int_{\rn\setminus B(x,10r)}{|e^{t\Delta}f(w)|^2}{|x-w|^{-(n+1)}}\,dw
$$
to obtain
$$
\int_0^{r^2}\Big(\int_{B(x,r)}|F_{r,x}(t,y)|^2dy\Big)t^{-\alpha}dt\lesssim \sum_{l=1}^\infty\int_{B(x,r10^{1+l})\setminus B(x,r10^l)}
\frac{\Big(\int_0^{r^2}|e^{t\Delta}f(w)|^2t^{-\alpha}dt\Big)}{(|w-x|r^{-1})^{n+1}}\,dw\lesssim {\|f\|^2_{Q^{-1}_{\alpha}{}}}{r^{2\alpha-n}}.
$$
A combination of the above estimates for $F_{r,x}$ and $G_{r,x}$ yields
\begin{equation}\label{e1813}
\int_0^{r^2}\int_{B(x,r)}|f_{r}(t,y)|^2t^{-\alpha}dydt\lesssim
r^{n-2\alpha}\|f\|^2_{Q^{-1}_{\alpha}{}}.
\end{equation}
Of course, both (\ref{e1812}) and (\ref{e1813}) produce $f_{j,k}\in
Q_{\alpha}^{-1}{}$, as desired.

{\it Step 2}. We check $\nabla\cdot\big(Q_\alpha{}\big)^n=Q_{\alpha}^{-1}{}$. 

If $f\in\nabla\cdot\big(Q_{\alpha}{}\big)^n$,
then there exist $f_1,...,f_n\in Q_\alpha{}$ such that
$f=\sum_{j=1}^n{\partial_j}f_j$. Thus, an application of the Minkowski inequality derives
$$
\|f\|_{Q_{\alpha}^{-1}{}}\le\sum_{j=1}^n\big\|{\partial_j}f_j\big\|_{Q_{\alpha}^{-1}{}}\lesssim\sum_{j=1}^n\|f_j\|_{Q_{\alpha}{}}<\infty.
$$
Conversely, if $f\in Q_{\alpha}^{-1}{}$, then an application of {\it Step 1} derives $f_{j,k}={\partial_j\partial_k}(-\Delta)^{-1}f\in Q^{-1}_{\alpha}{}$, whence giving $f_k=-{\partial_k}(-\Delta)^{-1}f\in Q_\alpha{}$. So,
$$
\widehat{\sum_{k=1}^n{\partial_k}f_k}=-\sum_{k=1}^n\widehat{f_{k,k}}=\hat{f}\Longrightarrow f\in \nabla\cdot\big(Q_\alpha{}\big)^n.
$$ 

{\it Step 3}. (\ref{e1811}) follows immediately from (\ref{e1810}).
\end{proof}

\subsection{Navier-Stokes system initiated in $\{(Q_\alpha^{-1})^n\}_{0\le\alpha<1}$}\label{s182b}

Classically, the Cauchy problem for (\ref{e181}) is to establish the existence of a solution (velocity)
$u=u(t,x)=\big(u_1(t,x),...,u_n(t,x)\big)$
with a pressure $p=p(t,x)$ of the fluid at time $t\in (0,\infty)$ and position $x\in\rn$ assuming the initial data/velocity $a=a(x)=(a_1(x),...,a_n(x))$. Of particularly important is the invariance of (\ref{e181}) under the scaling transform:
$$
\left\{\begin{array}{r@{}l}
u(t,x)\mapsto u_\lambda(t,x)=\lambda u(\lambda^2t,\lambda x);\\
p(t,x)\mapsto p_\lambda(t,x)=\lambda^2p(\lambda^2t,\lambda x);\\
a(x)\mapsto a_\lambda(x)=\lambda a(\lambda x).
\end{array}
\right.
$$
Namely, if $(u(t,x), p(t,x), a(x))$ solves (\ref{e181}) then
$(u_\lambda(t,x), p_\lambda(t,x), a_\lambda(x))$ also solves
(\ref{e181}) for any $\lambda>0$. This suggests a consideration of (\ref{e181}) with an initial data being of the scaling invariance. Through the scale invariance
$$
\|a_\lambda\|_{(L^n{})^n}=\sum_{j=1}^n\|(a_j)_\lambda\|_{L^n{}}=\|a\|_{(L^n{})^n},
$$
Kato proved in \cite{Ka} that (\ref{e181}) has mild solutions
locally in time if $a\in (L^n{})^n$ and globally if $\|a\|_{(L^n{})^n}$
is small enough (for some generalizations of Kato's result, see e.g.
\cite{Ta} and \cite{Y}). 
Note that $\|\cdot\|_{Q_{\alpha}^{-1}{}}$ is invariant under the scale transform $a(x)\mapsto \lambda a(\lambda x)$. So it is a natural thing to extend the Kato's results to $\{Q^{-1}_\alpha\}_{0\le\alpha<1}$. To do this, we introduce the following concept whose case with $\alpha=0$ coincides with the space triple $(BMO^{-1}_{T},\overline{VMO}^{-1},X_T)$ in \cite{KoTa}.

\begin{definition}\label{d183} Let $(\alpha,T)\in [0,1)\times (0,\infty]$.

\item{\rm (i)} A distribution $f$ on $\rn$ is said to be in $Q_{\alpha;T}^{-1}{}$ provided
$$
\|f\|_{Q^{-1}_{\alpha;T}{}}=\sup_{(r,x)\in (0,{T})\times\rn}\left(r^{2\alpha-n}\int_0^{r^2}\int_{B(x,r)}|e^{t\Delta}f(y)|^2t^{-\alpha}\,dydt\right)^\frac12<\infty.
$$

\item{\rm (ii)} A distribution $f$ on $\rn$ is said to be in
$\overline{VQ}_\alpha^{-1}$ provided 
$\lim_{T\to
0}\|f\|_{Q^{-1}_{\alpha;T}{}}=0.$

\item{\rm (iii)} A function $g$ on $\bn$ is said to be in 
$X_{\alpha; T}{}$ provided
$$
\|g\|_{X_{\alpha, T}{}}=\sup_{t\in
(0,T)}{t}^\frac12\|g(t,\cdot)\|_{L^\infty{}}+\sup_{(r,x)\in (0,{T})\times\rn}
\left(r^{2\alpha-n}\int_0^{r^2}\int_{B(x,r)}|g(t,y)|^2t^{-\alpha}dydt\right)^\frac12<\infty.
$$
\end{definition}

Clearly, if $0\le\alpha_1\le\alpha_2<1$ then $X_{\alpha_2;T}\subseteq X_{\alpha_1;T}$. Moreover, one has:
$$
\left\{\begin{array}{r@{}l}
f_\lambda(x)=\lambda f(\lambda x);\\
g_\lambda(t,x)=\lambda g(\lambda^2 t,\lambda x);\\
(\lambda,t,x)\in (0,\infty)\times(0,\infty)\times\rn,
\end{array}
\right.
\Longrightarrow
\left\{\begin{array}{r@{}l}
\|f_\lambda\|_{Q^{-1}_{\alpha;\infty}{}}=\|f\|_{Q^{-1}_{\alpha;\infty}{}};\\ \|g_\lambda\|_{X_{\alpha;\infty}{}}=\|g\|_{X_{\alpha;\infty}{}}.
\end{array}
\right.
$$
Also, recalling (cf. \cite{CaMePl})
$$
f\in \dot{B}_{p,\infty}^{-1+\frac np}{}\ \hbox{under}\
p>n\Longleftrightarrow\|e^{t\Delta}f\|_{L^p{}}\lesssim
t^{\frac{n-p}{2p}}\ \hbox{for\ all}\ t>0,
$$
one has
$$
p>n>\alpha p\Longrightarrow
L^n{}\subseteq \dot{B}_{p,\infty}^{-1+\frac np}{}
\subseteq Q^{-1}_{\alpha;\infty}{}=Q^{-1}_\alpha,
$$
which follows from H\"older's inequality based calculation for $r\in (0,1)$:
$$
\int_0^{r^2}\int_{B(x,r)}|e^{t\Delta}f(y)|^2t^{-\alpha}dydt\lesssim r^{n(1-\frac2p)}\int_0^{r^2}\|e^{t\Delta}f\|_{L^p{}}^2 t^{-\alpha}dt\lesssim r^{n-2\alpha}.
$$

In order to establish the existence and uniqueness of a mild solution of (\ref{e181}) with an initial data in $(Q_\alpha^{-1})^n$, we need two lemmas.

\begin{lemma}\label{l181} Given $(\alpha,T)\in [0,1)\times(0,\infty]$ and a function $f(\cdot,\cdot)$ on $\bn$, let 
$$
\mathsf{I}(f,t,x)=\int_0^t e^{(t-s)\Delta}\Delta f(s,x)ds\quad\forall\quad (t,x)\in\bn.
$$ 
Then
\begin{equation}\label{e1814}
\int_0^T\big\|\mathsf{I}(f,t,\cdot)\big\|_{L^2{}}^2t^{-\alpha}dt\lesssim\int_0^T\big\|f(t,\cdot)\big\|_{L^2{}}^2t^{-\alpha}dt.
\end{equation}
\end{lemma}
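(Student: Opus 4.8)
The plan is to exploit the smoothing properties of the heat semigroup together with the Plancherel theorem on the space variable, reducing the estimate to a one‑dimensional weighted Hardy‑type inequality in the time variable. First I would pass to the Fourier side: writing $\widehat{f(s,\cdot)}(\xi)$ for the spatial Fourier transform, the operator $\mathsf{I}(f,t,\cdot)$ has symbol
$$
\widehat{\mathsf{I}(f,t,\cdot)}(\xi)=-\int_0^t e^{-(t-s)(2\pi|\xi|)^2}\,(2\pi|\xi|)^2\,\widehat{f(s,\cdot)}(\xi)\,ds,
$$
so by Plancherel the left side of \eqnref{e1814} becomes $\int_0^T\!\int_{\rn}\big|\int_0^t e^{-(t-s)\lambda}\lambda\,\hat f(s,\xi)\,ds\big|^2\,d\xi\,t^{-\alpha}\,dt$ with $\lambda=(2\pi|\xi|)^2$. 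Fixing $\xi$ (hence $\lambda>0$), I am left with a scalar convolution‑type operator $T_\lambda h(t)=\int_0^t \lambda e^{-(t-s)\lambda}h(s)\,ds$ acting on $h(s)=\hat f(s,\xi)$, and it suffices to prove the uniform (in $\lambda$) bound $\int_0^T|T_\lambda h(t)|^2 t^{-\alpha}\,dt\lesssim \int_0^T |h(t)|^2 t^{-\alpha}\,dt$; integrating this in $\xi$ then finishes the proof.

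For the scalar bound the key observation is that the kernel $K_\lambda(t,s)=\lambda e^{-(t-s)\lambda}\mathbf 1_{\{s<t\}}$ satisfies $\int_0^t K_\lambda(t,s)\,ds\le 1$ and $\int_s^\infty K_\lambda(t,s)\,dt=1$, i.e. it is (sub)stochastic in both variables. The remaining point is to control the weight $t^{-\alpha}$: since $0\le\alpha<1$ the weight is locally integrable, and because $K_\lambda(t,s)$ is supported on $s<t$ one has $t^{-\alpha}\le s^{-\alpha}$ on the support, so a Schur‑test argument with test function $\varphi(t)=t^{-\alpha/2}$ (or equivalently a direct Cauchy–Schwarz splitting $K_\lambda^{1/2}\cdot K_\lambda^{1/2}$ together with the two mass bounds above) yields
$$
\int_0^T|T_\lambda h(t)|^2 t^{-\alpha}\,dt\le\int_0^T\!\Big(\int_0^t K_\lambda(t,s)\,ds\Big)\Big(\int_0^t K_\lambda(t,s)|h(s)|^2\,ds\Big)t^{-\alpha}\,dt\le\int_0^T|h(s)|^2 s^{-\alpha}\,ds,
$$
where in the last step one swaps the order of integration and uses $t^{-\alpha}\le s^{-\alpha}$ followed by $\int_s^T K_\lambda(t,s)\,dt\le 1$. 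All constants are independent of $\lambda$ and of $T\in(0,\infty]$.

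The main obstacle — or rather the one place requiring care — is the interaction between the singular weight $t^{-\alpha}$ and the convolution kernel when $\alpha>0$: one cannot simply cite the $L^2$ boundedness of $T_\lambda$, and a naïve Minkowski estimate loses the weight. The monotonicity $t^{-\alpha}\le s^{-\alpha}$ on the support $\{s<t\}$ is exactly what rescues the argument, and it is robust enough that the endpoint $T=\infty$ needs no separate treatment. An alternative, should one prefer to avoid the Fourier reduction, is to differentiate under the integral sign to write $\mathsf{I}(f,t,x)=f(t,x)-e^{t\Delta}f(0,x)-\int_0^t e^{(t-s)\Delta}\partial_s f(s,x)\,ds$ and estimate termwise; but this requires extra regularity in $s$, so the Fourier‑analytic route above is cleaner and self‑contained.
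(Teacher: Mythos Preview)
Your proposal is correct and follows essentially the same route as the paper: both arguments pass to the spatial Fourier side, apply Plancherel to reduce \eqref{e1814} to a one-dimensional weighted estimate for each fixed frequency, and then control the resulting convolution operator by a Schur-type argument. Your choice of test function $\varphi(t)=t^{-\alpha/2}$ (equivalently, the monotonicity $t^{-\alpha}\le s^{-\alpha}$ on the support) is exactly what the paper does when it writes $F(s,\zeta)=s^{-\alpha/2}|\widehat{f(s,\cdot)}(\zeta)|$ and absorbs the factor $(s/t)^{\alpha/2}$ into the kernel before invoking the Schur lemma.
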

\begin{proof} This lemma and its proof are basically the same as \cite[Lemma 3.1]{Xiao} and its argument under $\alpha\in (0,1)$.

It is enough to verify (\ref{e1814}) for $T=\infty$ thanks to three facts: (i) $\mathsf{I}(f,\cdot,\cdot)$ counts only on the values of $f$ on $(0,t)\times\rn$; (ii) if $T<\infty$ then one can extend $f$ by letting $f=0$ on $(T,\infty)$; (iii) we can define $f(\cdot)=0=\mathsf{I}(f,t,\cdot)$ for $t\in (-\infty,0)$.

Through defining
$$
\kappa(t,x)=\left\{\begin{array}{r@{}l}
\Delta e^{t\Delta}(x,0)\ \ \hbox{for}\ \  t>0;\\
0\ \ \hbox{for}\ \ t\le 0,
\end{array}
\right.
$$
we get
$$
\mathsf{I}(f,t,x)=\int_{\mathbb R}\int_{\rn}\kappa(t-s,x-y)f(s,y)dyds,
$$
whence finding that $\mathsf{I}(f,t,x)$ is actually a convolution operator over $\mathbb R^{1+n}$. Due to
$$
\widehat{\kappa(t,\cdot)}(\zeta)=\int_{\rn}\kappa(t,x)\exp(-2\pi i x\cdot \zeta)dx=-(2\pi)^2|\zeta|^2\exp\big(-(2\pi)^2t|\zeta|^2\big),
$$
we have

\begin{align*}
\widehat{\mathsf{I}(f,t,\cdot)}(\zeta)&=\int_{\mathbb R^{1+n}}f(s,y)\left(\int_{\rn}\kappa(t-s,v)\exp(-2\pi i (v+y)\cdot\zeta)dv\right)dyds\\
&=-(2\pi)^2\int_0^t|\zeta|^2\exp\big(-(2\pi)^2(t-s)|\zeta|^2\big)\widehat{f(s,\cdot)}(\zeta)ds.
\end{align*}
This last formula, along with the Fubini theorem and the Plancherel formula, derives

\begin{align*}
\int_0^\infty\big\|\mathsf{I}(f,t,\cdot)\big\|^2_{L^2}t^{-\alpha}dt&\le\int_0^\infty\left(\int_{\rn}\Big(\int_0^t\frac{|\zeta|^2|\widehat{f(s,\cdot)}(\zeta)|}{\exp\big((2\pi)^2(t-s)|\zeta|^2\big)}\,ds\Big)^2d\zeta\right)t^{-\alpha}dt\\
&\approx\int_{\rn}\left(\int_0^\infty\Big(\int_0^\infty
\frac{\big(1_{\{0\le s\le
t\}}\big)|\zeta|^2|\widehat{f(s,\cdot)}(\zeta)|}{\exp\big((2\pi)^2(t-s)|\zeta|^2\big)}ds\Big)^2t^{-\alpha}dt\right)d\zeta.
\end{align*}
This indicates that if one can verify
\begin{equation}\label{e1815}
\int_0^\infty\left(\int_0^\infty \Big(1_{\{0\le s\le
t\}}\Big)\frac{|\zeta|^2|\widehat{f(s,\cdot)}(\zeta)|}{\exp\big((t-s)|\zeta|^2\big)}ds\right)^2t^{-\alpha}dt\lesssim\int_0^\infty
|\widehat{f(t,\cdot)}(\zeta)|^2t^{-\alpha}dt,
\end{equation}
then the Plancherel formula can be used once again to produce
$$
\int_0^\infty\big\|\mathsf{I}(f,t,\cdot)\big\|_{L^2{}}^2t^{-\alpha}dt
\lesssim\int_0^\infty\big\|f(t,\cdot)\|_{L^2{}}^2t^{-\alpha}dt,
$$
as required.

To prove (\ref{e1815}), let us rewrite its left side as
$
\int_0^\infty\left(\int_0^\infty K(s,t)F(s,\zeta)ds\right)^2dt,
$
where
$$
\left\{\begin{array}{r@{}l}
F(s,\zeta)=s^{-\frac{\alpha}{2}}|\widehat{f(s,\cdot)}(\zeta)|;\\
K(s,t)=\big(1_{\{0\le s\le t\}}\big)\big(\frac{s}{t}\big)^{\frac{\alpha}{2}}{|\zeta|^2}{\exp\big(-(t-s)|\zeta|^2\big)}.
\end{array}
\right.
$$
A simple calculation shows
$$
\left\{\begin{array}{r@{}l}
\int_0^\infty K(s,t)ds=|\zeta|^2\int_0^t\big(\frac{s}{t}\big)^{\frac{\alpha}{2}}\exp(-(t-s)|\zeta|^2)ds\lesssim 1;\\
\int_0^\infty K(s,t)dt=|\zeta|^2\int_s^\infty\big(\frac{s}{t}\big)^{\frac{\alpha}{2}}\exp(-(t-s)|\zeta|^2)dt\lesssim 1,
\end{array}
\right.
$$
and then an application of the Schur lemma gives 
$$
\int_0^\infty\left(\int_0^\infty K(s,t)F(s,\zeta)ds\right)^2dt\lesssim\int_0^\infty \big(F(t,\zeta)\big)^2dt,
$$
as desired.
\end{proof}

\begin{lemma}\label{l182} Given $\alpha\in [0,1)$ and a function $f$ on $(0,1)\times\rn$, let
$$
\mathsf{J}(f;\alpha)=\sup_{(r,x)\in(0,1)\times\rn}r^{2\alpha-n}\int_0^{r^2}\int_{B(x,r)}|f(t,y)|t^{-\alpha}dtdy.
$$
Then
\begin{equation}\label{e1816}
\int_0^1\Big\|\sqrt{-\Delta}e^{t\Delta}\int_0^t
f(s,\cdot)ds\Big\|_{L^2{}}^2t^{-\alpha}dt\lesssim
\mathsf{J}(f;\alpha)\int_0^1\big\|f(t,\cdot)\big\|_{L^1{}}t^{-\alpha}dt.
\end{equation}
\end{lemma}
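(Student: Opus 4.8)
The plan is a real-variable argument whose workhorse is the operator identity $(\sqrt{-\Delta}\,e^{t\Delta})^{*}(\sqrt{-\Delta}\,e^{t\Delta})=-\Delta\,e^{2t\Delta}$. The convolution kernel of $\sqrt{-\Delta}\,e^{t\Delta}$ has only polynomial decay at infinity (the operator $\sqrt{-\Delta}$ being nonlocal), but the kernel $\Theta_t$ of $-\Delta\,e^{2t\Delta}$ equals $-\Delta_z$ of a Gaussian and hence satisfies $|\Theta_t(z)|\lesssim t^{-(n+2)/2}e^{-c|z|^2/t}$ for some absolute $c>0$. Putting $F(t,\cdot)=\int_0^t f(s,\cdot)\,ds$ and using the self-adjointness of $\sqrt{-\Delta}\,e^{t\Delta}$,
\[
\Big\|\sqrt{-\Delta}\,e^{t\Delta}F(t,\cdot)\Big\|_{L^2}^2=\big\langle -\Delta\,e^{2t\Delta}F(t,\cdot),\,F(t,\cdot)\big\rangle=\int_0^t\!\!\int_0^t\!\iint f(s,y)\,\Theta_t(y-y')\,f(s',y')\,dy\,dy'\,ds\,ds'.
\]
Passing to absolute values and using Fubini to move $\int_0^1 t^{-\alpha}\,dt$ inside (the only surviving constraint being $t>\max\{s,s'\}$), the left-hand side of \eqref{e1816} is dominated by
\[
\int_0^1\!\!\int_0^1\!\iint_{\rn\times\rn}|f(s,y)|\,|f(s',y')|\Big(\int_{\max\{s,s'\}}^{1} t^{-\alpha}\,|\Theta_t(y-y')|\,dt\Big)\,dy\,dy'\,ds\,ds',
\]
and everything reduces to estimating this quadruple integral.

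For the inner time integral, the bound $|\Theta_t(\rho)|\lesssim t^{-(n+2)/2}e^{-c\rho^2/t}$ forces the integrand to be negligible for $t\ll\rho^2$ and of size $t^{-\alpha-(n+2)/2}$ for $t\gtrsim\rho^2$, so a one-variable computation (substitute $u=\rho^2/t$) gives, for all $\sigma\in(0,1)$ and $\rho>0$,
\[
\int_{\sigma}^{1} t^{-\alpha}\,|\Theta_t(\rho)|\,dt\ \lesssim\ \min\big\{\sigma^{-\alpha-n/2},\ \rho^{-2\alpha-n}\big\},
\]
with a further super-exponentially small factor in $\rho^2$ once $\rho\gtrsim 1$. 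Inserting this and symmetrising in $(s,y)\leftrightarrow(s',y')$ (at the cost of a harmless factor $2$), the task reduces to the pointwise-in-$(s',y')$ bound
\[
\int_0^{s'}\!\int_{\rn}|f(s,y)|\,\min\big\{(s')^{-\alpha-n/2},\,|y-y'|^{-2\alpha-n}\big\}\,dy\,ds\ \lesssim\ \mathsf{J}(f;\alpha)\,(s')^{-\alpha};
\]
multiplying this by $|f(s',y')|$ and integrating over $(s',y')$ then recovers \eqref{e1816}.

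To prove the last display I would split the $y$-domain into $B(y',\sqrt{s'})$ together with the dyadic shells $S_j=\{\,2^{j-1}\sqrt{s'}\le|y-y'|<2^{j}\sqrt{s'}\,\}$, $j\ge1$. On $S_j$ the minimum is $\lesssim 2^{-j(2\alpha+n)}(s')^{-\alpha-n/2}$; on the other hand, since $(0,s')\subseteq(0,(2^j\sqrt{s'})^2)$ and $s^{-\alpha}\ge (s')^{-\alpha}$ there, the very definition of $\mathsf{J}(f;\alpha)$ (applied with centre $y'$ and radius $r=2^j\sqrt{s'}<1$) gives $\int_0^{s'}\!\int_{B(y',2^j\sqrt{s'})}|f|\,dy\,ds\lesssim (s')^{\alpha}\,\mathsf{J}(f;\alpha)\,(2^j\sqrt{s'})^{\,n-2\alpha}$. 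Multiplying the two bounds, the powers of $s'$ combine to $(s')^{-\alpha}$ and those of $2$ to $2^{-4j\alpha}$, so summing over $j\ge0$ produces the geometric series $\sum_{j\ge0}2^{-4j\alpha}$, convergent precisely because $\alpha>0$, with total $\lesssim\mathsf{J}(f;\alpha)(s')^{-\alpha}$. The finitely many shells with $2^j\sqrt{s'}\gtrsim 1$ are disposed of by covering $B(y',2^j\sqrt{s'})$ by $O(1)$ unit balls and using the super-exponential tail of $\Theta_t$; the interchanges of integration are legitimate since $\Theta_t\in L^1\cap L^\infty$ and $f(s,\cdot)\in L^1$ for a.e.\ $s$.

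The step I expect to be delicate is exactly this balancing act: one copy of the local $L^1$-mass of $f$ must be absorbed into $\mathsf{J}(f;\alpha)$ to the \emph{first} power while the surviving weights still sum, and it is this requirement that pins down where the factors $t^{-\alpha}$ and $s^{-\alpha}$ have to sit. For $\alpha\in(0,1)$ there is room to spare. At the endpoint $\alpha=0$, however, $\sum_j 2^{-4j\alpha}$ collapses to a sum over the $\sim\log(1/s')$ admissible shells and the crude count loses a logarithm; one then either invokes the $BMO^{-1}$ bilinear estimate of Koch--Tataru \cite{KoTa}, with which the $\alpha=0$ instance of \eqref{e1816} coincides, or recovers it through the sharper splitting $2^{-jn}\int_0^{s'}\!\int_{B(y',2^j\sqrt{s'})}|f|\le\min\big\{\mathsf{J}(f;0)(s')^{n/2},\ 2^{-jn}\!\int_0^{s'}\|f(s,\cdot)\|_{L^1}\,ds\big\}$ and summing the two regimes against each other. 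This is why, as already in \cite{Xiao}, it is cleanest to run the argument above for $\alpha\in(0,1)$ and treat $\alpha=0$ separately.
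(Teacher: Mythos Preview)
Your argument for $\alpha\in(0,1)$ is correct and shares the opening move with the paper: both expand $\|\sqrt{-\Delta}\,e^{t\Delta}F(t,\cdot)\|_{L^2}^2$ via the self-adjoint identity and symmetrise to the region $h<s<t$. The paper then proceeds differently. Rather than bounding $\int_{s}^{1}t^{-\alpha}|\Theta_t(\rho)|\,dt$ by size and splitting into dyadic shells, it first pulls the weight out crudely via $t^{-\alpha}\le s^{-\alpha}$ (valid since $t\ge s$), after which the remaining $t$-integral $\int_s^1(-\Delta)e^{2t\Delta}\,dt=\tfrac12(e^{2s\Delta}-e^{2\Delta})$ telescopes to an honest heat semigroup. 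This lets the paper bound the whole expression by $\big(\int_0^1\|f(s,\cdot)\|_{L^1}s^{-\alpha}ds\big)\cdot\sup_{s\le1}\big\|\int_0^s e^{2s\Delta}|f(h,\cdot)|\,dh\big\|_{L^\infty}$, and the second factor is controlled by the Carleson-type maximal inequality of \cite[p.~163]{Lem}, which in turn is $\lesssim\mathsf{J}(f;\alpha)$. Your dyadic-shell computation is in effect a hands-on reproof of that maximal lemma; what the paper buys by going through \cite{Lem} is uniformity in $\alpha$.

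This is exactly where your endpoint discussion falls short. The paper's route genuinely covers $\alpha=0$: the step $t^{-\alpha}\le s^{-\alpha}$ is harmless there, and the Carleson lemma delivers $\mathsf{J}(f;0)$ with no logarithmic loss. By contrast, your proposed ``sharper splitting'' $2^{-jn}\int_0^{s'}\!\int_{B}|f|\le\min\{\mathsf{J}(f;0)(s')^{n/2},\,2^{-jn}\int_0^{s'}\|f\|_{L^1}\}$ does not remove the $\log(1/s')$: summing the minimum of a constant and a geometrically decaying sequence still costs the index at which they cross, and that index is $\sim\log(1/s')$. So as written your proof does not cover the full range $\alpha\in[0,1)$ claimed by the lemma; you would need either to import the Koch--Tataru estimate wholesale (as you note) or to adopt the paper's telescoping device. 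A minor slip: for the far shells $2^j\sqrt{s'}\gtrsim1$ you cannot cover $B(y',2^j\sqrt{s'})$ by $O(1)$ unit balls---it takes $O((2^j\sqrt{s'})^n)$ of them---but the super-exponential factor $e^{-c(2^j\sqrt{s'})^2}$ absorbs this polynomial, so the conclusion survives.
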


\begin{proof} This lemma and its argument follow from \cite[Lemma 3.2]{Xiao} and its proof. 

To be short, let $\langle\cdot,\cdot\rangle$ be
the inner product in $L^2{}$ with respect to the space variable
$x\in\rn$. Then
\begin{align*}
\|\cdots\|_{L^2{}}^2&=\int_{\rn}\Big|\sqrt{-\Delta}e^{t\Delta}\int_0^t f(s,y)ds\Big|^2 dy\\
&=\int_0^t\int_0^t\left\langle \sqrt{-\Delta}e^{t\Delta}f(s,\cdot),
\sqrt{-\Delta}e^{t\Delta}f(h,\cdot)\right\rangle dsdh.
\end{align*}
Consequently
\begin{align*}
\int_0^1\|\cdots\|_{L^2{}}^2t^{-\alpha}dt&\lesssim\iint_{0<h<s<1}\left\langle |f(s,\cdot)|, (e^{2\Delta}-e^{2s\Delta})|f(h,\cdot)|\right\rangle\,s^{-\alpha} dsdh\\
&\lesssim\left(\int_0^1\|f(s,\cdot)\|_{L^1{}}\,{s^{-\alpha}ds}\right)\sup_{s\in
(0,1]}\left\|\int_0^s e^{2s\Delta}|f(h,\cdot)|dh\right\|_{L^\infty{}}.
\end{align*}
From \cite[p. 163]{Lem} it follows that
$$
\sup_{(s,z)\in(0,1]\times\rn}\int_0^s e^{2s\Delta}|f(h,z)|dh
\lesssim\sup_{(r,x)\in(0,1)\times\rn}r^{-n}\int_0^{r^2}\int_{B(x,r)}|f(s,y)|dyds.
$$
and so that 
$$
\sup_{(s,z)\in(0,1]\times\rn}\int_0^s e^{2s\Delta}|f(h,z)|dh\lesssim\sup_{(r,x)\in(0,1)\times\rn}{r^{2\alpha-n}\int_0^{r^2}\int_{B(x,r)}|f(s,y)|\,{s^{-\alpha}}dsdy}.
$$
This in turn implies
$$
\int_0^1\|\cdots\|_{L^2{}}^2t^{-\alpha}dt\lesssim
\mathsf{J}(f;\alpha)\int_0^1\|f(s,\cdot)\|_{L^1{}}\, {s^{-\alpha}ds},
$$
whence giving (\ref{e1816}).
\end{proof}

Below is the so-called existence and uniqueness result for a mild solution to (\ref{e181}) established in \cite{KoTa, Xiao}.

\begin{theorem}\label{t183} Let $\alpha\in [0,1)$. Then

\item {\rm (i)} (\ref{e181}) has a unique small
global mild solution $u$ in $(X_{\alpha}{})^n$ for all initial data
$a$ with $\nabla\cdot a=0$ and $\|a\|_{(Q^{-1}_{\alpha}{})^n}$
being small.

\item{\rm (ii)} For any $T\in (0,\infty)$ there is an $\epsilon>0$
such that (\ref{e181}) has a unique small mild
solution $u$ in $(X_{\alpha;T})^n$ on $(0,T)\times\rn$ when the initial
data $a$ satisfies $\nabla\cdot a=0$ and
$\|a\|_{(Q^{-1}_{\alpha;T}{})^n}\le\epsilon$. Consequently, for all
$a\in \big(\overline{VQ}_\alpha^{-1}\big)^n$ with $\nabla\cdot a=0$
there exists a unique small local mild solution $u$ in $(X_{\alpha;
T}{})^n$ on $(0,T)\times\rn$.
\end{theorem}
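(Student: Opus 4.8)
The plan is to run the standard Picard/Kato fixed-point scheme for the integral equation \eqref{eIe} in the Banach space $(X_{\alpha;T})^n$ (resp.\ $(X_{\alpha})^n$ when $T=\infty$), exactly as in \cite{KoTa} for $\alpha=0$ and in \cite{Xiao} for $\alpha\in(0,1)$, with the two lemmas above furnishing the bilinear estimate. Write the solution map as $u\mapsto \Phi(u)=e^{t\Delta}a - B(u,u)$, where
$$
B(u,v)(t,x)=\int_0^t e^{(t-s)\Delta}P\nabla\cdot(u\otimes v)(s,\cdot)\,ds .
$$
By the Banach contraction principle in the form used by Kato, it suffices to prove: (a) the linear term $e^{t\Delta}a\in (X_{\alpha;T})^n$ with $\|e^{t\Delta}a\|_{(X_{\alpha;T})^n}\lesssim \|a\|_{(Q^{-1}_{\alpha;T})^n}$; and (b) the bilinear estimate $\|B(u,v)\|_{(X_{\alpha;T})^n}\lesssim \|u\|_{(X_{\alpha;T})^n}\|v\|_{(X_{\alpha;T})^n}$, with a constant independent of $T$. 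Granting (a) and (b), the abstract lemma yields a unique solution whenever $\|e^{t\Delta}a\|_{(X_{\alpha;T})^n}$ is small; this gives (i) directly (take $T=\infty$ and $\|a\|_{(Q^{-1}_\alpha)^n}$ small, using scale invariance), and for (ii) one notes that for fixed $a\in(\overline{VQ}^{-1}_\alpha)^n$ with $\|a\|_{(Q^{-1}_{\alpha;T})^n}\le\epsilon$ the linear part is small once $T$ is small, by Definition~\ref{d183}(ii).

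For (a), the bound on the $L^\infty$ part $\sup_{t<T}t^{1/2}\|e^{t\Delta}a\|_{L^\infty}$ comes from $Q^{-1}_{\alpha;T}\subseteq BMO^{-1}_{T}\subseteq\dot B^{-1,\infty}_\infty$ (the embedding already invoked in Step~1 of the proof of Theorem~\ref{t184}), while the Carleson-type part is immediate since the semigroup term $e^{t\Delta}a$ is literally what appears in the definition of the $Q^{-1}_{\alpha;T}$ norm. For (b), the argument is the real content. Split $P\nabla\cdot(u\otimes v)$ using $P=I+R\otimes R$ and write $B(u,v)$ as an integral in $s$ of $e^{(t-s)\Delta}$ applied to (Riesz transforms of) $\partial(u\otimes v)$. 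To estimate the Carleson part of $B(u,v)$ one localizes: fix $(r,x)$ with $r<T$, split $u\otimes v = \mathbf 1_{2B}(u\otimes v)+\mathbf 1_{(2B)^c}(u\otimes v)$ where $B=B(x,r)$. The local piece is handled by Lemma~\ref{l181} (after absorbing one derivative as $\sqrt{-\Delta}$, in the form $\mathsf I$, and using $\|u\otimes v(s,\cdot)\|_{L^1(2B)}\le \|u(s,\cdot)\|_{L^2(2B)}\|v(s,\cdot)\|_{L^2(2B)}$ together with the Carleson control on $u,v$) — or more precisely by Lemma~\ref{l182} applied to $f=\mathbf 1_{2B}(u\otimes v)$, which converts the product of two Carleson-square quantities into $\mathsf J(f;\alpha)\int\|f\|_{L^1}t^{-\alpha}\,dt$, each factor bounded by $\|u\|_{X}\|v\|_{X}$; the far piece is handled by kernel estimates for $e^{(t-s)\Delta}P\nabla\cdot$ off the diagonal (decay $|x-w|^{-(n+1)}$ after integrating in $t$, exactly the device used for $F_{r,x}$ in Step~1 of Theorem~\ref{t184}), summed over dyadic annuli. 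The $L^\infty$ part of $B(u,v)$ is estimated directly from $\|e^{(t-s)\Delta}\nabla\cdot\|_{L^\infty\to L^\infty}\lesssim (t-s)^{-1/2}$ and $\|u\otimes v(s,\cdot)\|_{L^\infty}\lesssim s^{-1}\|u\|_X\|v\|_X$, which is integrable against $(t-s)^{-1/2}$ after a scaling. All constants are $T$-independent since every estimate is scale invariant.

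The main obstacle is the bilinear bound (b), and within it the interplay of the two weights: the annular ("far") term must be controlled uniformly in $r<T$, and one must check that the extra factor $t^{-\alpha}$ (absent when $\alpha=0$) is exactly compensated by the $r^{2\alpha-n}$ prefactor — this is precisely why Lemmas~\ref{l181} and~\ref{l182} are stated with the $t^{-\alpha}$ weight, and it is the place where the argument of \cite{Xiao} for $\alpha\in(0,1)$ and that of \cite{KoTa} for $\alpha=0$ must be seen to be the same computation. A secondary point is the passage from the contraction argument to genuine (weak/mild) solvability of \eqref{e181} and the divergence-free, initial-data properties of $u$; these follow from standard arguments (continuity of $B$, $e^{t\Delta}a\to a$ in the appropriate weak sense, and $\nabla\cdot B(u,u)=0$ because $P$ projects onto divergence-free fields), so I would only sketch them. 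Uniqueness in the small ball is automatic from the contraction.
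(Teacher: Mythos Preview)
Your overall scheme matches the paper's --- Picard contraction in $(X_{\alpha;T})^n$, linear estimate (a) exactly as you describe, and bilinear estimate (b) via a near/far splitting with the far part handled by $|x-w|^{-(n+1)}$ kernel decay --- but two places in your sketch are genuine gaps.

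First, your $L^\infty$ bound for $\mathsf B(u,v)$ fails as written: the integrand $(t-s)^{-1/2}s^{-1}$ is \emph{not} integrable on $(0,t)$ (the $s^{-1}$ singularity at $s=0$ diverges), so ``integrable after a scaling'' is false. The paper splits the time integral at $t/2$. On $(t/2,t)$ your estimate works and gives $\int_{t/2}^t (t-s)^{-1/2}s^{-1}\,ds\lesssim t^{-1/2}$; on $(0,t/2)$ one instead uses the pointwise kernel bound $|e^{(t-s)\Delta}P\nabla\cdot(u\otimes v)(x)|\lesssim\int_{\rn}(\sqrt t+|x-y|)^{-(n+1)}|u(s,y)||v(s,y)|\,dy$, decomposes $\rn$ into lattice cubes of side $\sqrt t$, and controls $\int_0^t\int_{\text{cube}}|u||v|\,dy\,ds$ by Cauchy--Schwarz and the \emph{Carleson} half of the $X_{\alpha;T}$ norm rather than the $L^\infty$ half. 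This interaction between the two parts of the norm is essential.

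Second, for the local Carleson piece it is not Lemma~\ref{l181} \emph{or} Lemma~\ref{l182} --- both are required, each for a separate sub-piece. With $\mathsf M=1_{r,x}(u\otimes v)$ and after pulling out $(-\Delta)^{-1/2}P\nabla\cdot$, the near contribution $\int_0^s e^{(s-h)\Delta}(-\Delta)^{1/2}\mathsf M(h)\,dh$ is split via $(-\Delta)^{-1/2}\mathsf M=(-\Delta)^{-1/2}(I-e^{h\Delta})\mathsf M+(-\Delta)^{-1/2}e^{h\Delta}\mathsf M$ into a $\mathsf B_2$ term of the form $\int_0^s e^{(s-h)\Delta}\Delta(\cdots)\,dh$ (handled by Lemma~\ref{l181} together with the operator bound $\|(-\Delta)^{-1/2}(I-e^{s\Delta})\|_{L^2\to L^2}\lesssim s^{1/2}$) and a $\mathsf B_3$ term that collapses to $(-\Delta)^{1/2}e^{s\Delta}\int_0^s\mathsf M\,dh$ (handled by Lemma~\ref{l182}). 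Your sketch elides this algebraic split and the auxiliary operator bound; without them the local estimate does not close.
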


\begin{proof} For completeness, we give a proof based on a slight improvement of the argument for \cite[Theorem 1.4 (i)-(ii)]{Xiao}. 

Notice that the following estimate for a distribution $f$ on $\rn$ (cf. \cite[Lemma 16.1]{Lem}):
$$
\|e^{t\Delta}f\|_{L^\infty{}}\lesssim t^{-\frac{1+n}{2}}\sup_{x\in\rn}\int_0^t\int_{B(x,t)}|e^{s\Delta}f(y)|^2\,dyds\quad\forall\quad t\in (0,\infty)
$$
implies
$$
{t}^\frac12\|e^{t\Delta}f\|_{L^\infty{}}\lesssim\|f\|_{Q_{0;T}^{-1}{}}\lesssim\|f\|_{Q_{\alpha;T}^{-1}{}}\quad\hbox{for}\quad 0<t<T\le\infty.
$$
So, according to the Picard
contraction principle (see e.g. \cite[p. 145, Theorem 15.1]{Lem}), we know
that verifying Theorem \ref{t183} via the integral equation (\ref{eIe}) is equivalent to showing that the
bilinear operator
$$
\mathsf B(u,v;t)=\int_0^t e^{(t-s)\Delta} P\nabla\cdot(u\otimes v)\,ds
$$
is bounded from $(X_{\alpha;T}{})^n\times(X_{\alpha;T}{})^n$ to $(X_{\alpha;T}{})^n$. Of course, $u\in (X_{\alpha;T}{})^n$ and $a\in (Q^{-1}_{\alpha;T}{})^n$ are respectively equipped with the norms:
$$
\left\{\begin{array}{r@{}}
\|u\|_{(X_{\alpha;T}{})^n}=\sum_{j=1}^n\|u_j\|_{X_{\alpha;T}{}};\\
\|a\|_{(Q^{-1}_{\alpha;T}{})^n}=\sum_{j=1}^n\|a_j\|_{Q^{-1}_{\alpha;T}{}}.
\end{array}
\right.
$$

{\it Step 1}. We are about to show $L^\infty$-bound: 
\begin{equation}\label{e1817}
|\mathsf B(u,v;t)|\lesssim
t^{-\frac12}\|u\|_{(X_{\alpha;T}{})^n}\|v\|_{(X_{\alpha;T}{})^n}\ \ \forall\ \ t\in (0,T).
\end{equation}

Indeed, if $\frac{t}{2}\le s<t$ then
$$
\|e^{(t-s)\Delta} P\nabla\cdot(u\otimes v)\|_{L^\infty{}}\lesssim(t-s)^{-\frac12}{\|u\|_{L^\infty{}}\|v\|_{L^\infty{}}}\lesssim\big(s(t-s)^\frac12\big)^{-1}{\|u\|_{(X_{\alpha;T}{})^n}\|v\|_{(X_{\alpha;T}{})^n}}.
$$
Meanwhile, if $0<s<\frac{t}{2}$ then

$$
|e^{(t-s)\Delta}P\nabla\cdot(u\otimes v)|\lesssim\int_{\rn}\frac{|u(s,y)||v(s,y)|}{({t}^\frac12+|x-y|)^{n+1}}dy\lesssim\sum_{{k}\in\mathbb
Z^n}{({t}^\frac12(1+|k|)^{-(n+1)}}{\int_{x-y{t}^\frac12(k+[0,1]^n)}\frac{|u(s,y)|}{|v(s,y)|^{-1}}dy}.
$$
The Cauchy-Schwarz inequality is applied to imply

$$
\int_0^t\int_{x-y\in t^\frac12(k+[0,1]^n)}|u(s,y)||v(s,y)|dyds
\lesssim
t^\frac{n}{2}\|u\|_{(X_{\alpha;T}{})^n}\|v\|_{(X_{\alpha;T}{})^n}.
$$
These inequalities in turn derive

\begin{align*}
|\mathsf B(u,v;t)|&\lesssim\int_0^{\frac t2}|e^{(t-s)\Delta}P\nabla\cdot(u\otimes v)|ds+\int_{\frac t2}^t|e^{(t-s)\Delta}P\nabla\cdot(u\otimes v)|ds\\
&\lesssim \left(t^{-\frac12}+\int_{\frac{t}{2}}^t s^{-1}(t-s)^{-\frac12}ds\right)\|u\|_{(X_{\alpha;T}{})^n}\|v\|_{(X_{\alpha;T}{})^n}\\
&\lesssim t^{-\frac12}\|u\|_{(X_{\alpha;T}{})^n}\|v\|_{(X_{\alpha;T}{})^n},
\end{align*}
producing (\ref{e1817}).

{\it Step 2}. We are about to prove $L^2$-bound: 
\begin{equation}\label{e1818}
r^{2\alpha-n}\int_0^{r^2}\int_{B(x,r)}|\mathsf B(u,v;t)|^2s^{-\alpha}dyds\lesssim\|u\|^2_{(X_{\alpha;T}{})^n}\|v\|^2_{(X_{\alpha;T}{})^n}\ \ \forall\ \ (r^2,x)\in (0,T)\times\rn.
\end{equation}

In fact, if
$$
\left\{\begin{array}{r@{}}
1_{r,x}=1_{B(x,10 r)};\\
\mathsf B(u,v;t)=\mathsf B_1(u,v;t)-\mathsf B_2(u,v;t)-\mathsf B_3(u,v;t);\\
\mathsf B_1(u,v;t)=\int_0^s e^{(s-h)\Delta}P\nabla\cdot\big((1-1_{r,x})u\otimes v\big)dh;\\
\mathsf B_2(u,v;t)=(-\Delta)^{-\frac 12}P\nabla\cdot\int_0^s e^{(s-h)\Delta}\Delta \Big((-\Delta)^{-\frac12}(I-e^{h\Delta})(1_{r,x})u\otimes v\Big)dh;\\
\mathsf B_3(u,v;t)=(-\Delta)^{-\frac 12}P\nabla\cdot(-\Delta)^\frac12 e^{s\Delta}\Big(\int_0^s\big(1_{r,x})u\otimes v\big)dh\Big);\\
I=\hbox{the\ identity\ operator},
\end{array}
\right.
$$
then one has the following consideration under $0<s<r^2$ and $|y-x|<r$.

First, we utilize the Cauchy-Schwarz inequality to get

\begin{align*}
|\mathsf B_1(u,v;t)|&\lesssim\int_0^s\int_{\rn\setminus B(x,10r)}\frac{|u(h,z)||v(h,z)|}{((s-h)^\frac12+|y-z|)^{n+1}}dzdh\\
&\lesssim\int_0^{r^2}\int_{\rn\setminus B(x,10r)}{|u(h,z)||v(h,z)|}{|x-z|^{-(n+1)}}dzdh\\
&\lesssim\frac{\left(\int_0^{r^2}\int_{\rn\setminus B(x,10r)}{|u(h,z)|^2}{|x-z|^{-(n+1)}}dzdh\right)^\frac12}{\left(\int_0^{r^2}\int_{\rn\setminus B(x,10r) }{|v(h,z)|^2}{|x-z|^{-(n+1)}}dzdh\right)^{-\frac12}}\\
&\lesssim r^{-1}\|u\|_{(X_{\alpha;T}{})^n}\|v\|_{(X_{\alpha;T}{})^n},
\end{align*}
whence obtaining
$$
\int_0^{r^2}\int_{B(x,r)}|\mathsf B_1(u,v;t)|^2t^{-\alpha}dydt\lesssim r^{n-2\alpha}\|u\|_{(X_{\alpha;T}{})^n}^2\|v\|_{(X_{\alpha;T}{})^n}^2.
$$

Next, for $\mathsf B_2(u,v;t)$ set
$$
\mathsf M(h,y)=1_{r,x}(u\otimes v)=1_{r,x}(y)\big(u(h,y)\otimes v(h,y)\big).
$$
From the $L^2$-boundedness of the Riesz transform and Lemma \ref{l181} it follows that

$$
\int_0^{r^2}\big\|\mathsf B_2(u,v;t)\big\|_{L^2{}}^2\,{t^{-\alpha}}dt
\lesssim\int_0^{r^2}\left\|\Big((-\Delta)^{-\frac12}(I-e^{s\Delta})\mathsf M(s,\cdot)\Big)\right\|_{L^2{}}^2\,{s^{-\alpha}}ds.
$$
Note that $\sup_{s\in (0,\infty)}s^{-1}(1-\exp(-s^2))<\infty$. So, $(-\Delta)^{-\frac12}(I-e^{s\Delta})$ is bounded on $L^2{}$ with operator norm $\lesssim {s}^\frac12$. This fact, along with the Cauchy-Schwarz inequality, implies
$$
\int_0^{r^2}\big\|\mathsf B_2(u,v;t)\big\|_{L^2{}}^2\,{t^{-\alpha}dt}\lesssim r^{n-2\alpha}\|u\|_{(X_{\alpha;T}{})^n}^2\|v\|_{(X_{\alpha;T}{})^n}^2.
$$

In a similar manner, we establish the following estimate for $\mathsf B_3(u,v;t)$:

$$
\int_0^{r^2}\big\|\mathsf B_3(u,v;t)\big\|_{L^2{}}^2\,{t^{-\alpha}dt}\lesssim r^{4+n-2\alpha}\int_0^1\left\|(-\Delta)^\frac12
e^{\tau\Delta}\int_0^\tau
|\mathsf M(r^2\theta,r\cdot)|d\theta\right\|_{L^2{}}^2\,{\tau^{-\alpha}}d\tau.
$$
Note that Lemma \ref{l182} ensures that if
$$
\mathsf K(M;\alpha)=\sup_{\rho\in
(0,1)}\rho^{-n}\int_0^{\rho^2}\int_{B(x,\rho)}
|\mathsf M(r^2\theta,rw)|\tau^{-\alpha}dwd\tau
$$
then
$$
\int_0^1\left\|(-\Delta)^\frac12 e^{\tau\Delta}\int_0^\tau |\mathsf M(r^2\theta,r\cdot)|d\theta\right\|_{L^2{}}^2\,{\tau^{-\alpha}d\tau}\lesssim \mathsf K(M;\alpha)\int_0^1\Big\|\mathsf M(r^2\theta,r\cdot)\Big\|_{L^1{}}\,{\theta^{-\alpha}d\theta}.
$$
So, the easily-verified estimates
$$
\left\{\begin{array}{r@{}}
\mathsf K(M;\alpha)\lesssim r^{-2}\|u\|_{(X_{\alpha;T}{})^n}\|v\|_{(X_{\alpha;T}{})^n};\\
\int_0^1\|\mathsf M(r^2\theta,r\cdot)\|_{L^1{}}\,{\theta^{-\alpha}d\theta}\lesssim r^{-2}\|u\|_{(X_{\alpha;T}{})^n}\|v\|_{(X_{\alpha;T}{})^n};
\end{array}
\right.
$$
derive
$$
\int_0^{r^2}\|\mathsf B_3(u,v;t)\|_{L^2{}}^2t^{-\alpha}dt\lesssim r^{n-2\alpha}\|u\|^2_{(X_{\alpha;T}{})^n}\|v\|^2_{(X_{\alpha;T}{})^n}.
$$
Putting the estimates for $\{\mathsf B_j(u,v)\}_{j=1}^3$ together, we reach (\ref{e1818}).

Finally, the boundedness of 
$\mathsf B(\cdot,\cdot;t): (X_{\alpha;T}{})^n\times(X_{\alpha;T}{})^n\mapsto (X_{\alpha;T}{})^n$ 
follows from both (\ref{e1817}) and (\ref{e1818}). Of course, $T=\infty$ and $T\in (0,\infty)$ assure (i) and (ii) respectively.
\end{proof}

\section{$\lim_{\alpha\to 1}Q^{-1}_\alpha$ and its Navier-Stokes equations}\label{s183} 
\setcounter{equation}{0}

\subsection{$(-\Delta)^{-1/2}L_{2,n-2}\ \&\ \lim_{\alpha\to 1}Q^{-1}_\alpha$}\label{s183a}

A careful observation of the analysis carried out in Section \ref{s182} reveals that one cannot take $\alpha=1$ in those lemmas and theorems. But, upon recalling
$$
Q_\alpha=(-\Delta)^{-\alpha/2}\mathcal{L}_{2,n-2\alpha}\quad\forall\quad\alpha\in (0,1),
$$
for which the proof given in the first group of estimates on \cite[p. 234]{Xiao} unfortunately contains five typos and the correct formulation reads as:
\begin{align*}
|(\psi_0)_t\ast f_2(y)&\lesssim\int_{\mathbb R^n\setminus 2B}\frac{t|f(z)-f_{2B}|}{(t+|x-z|)^{n+1}}\,dz\\
&\lesssim\int_{\mathbb R^n\setminus 2B}\frac{t|f(z)-f_{2B}|}{|x-z|^{n+1}}\,dz\\
&\lesssim t\sum_{k=1}^\infty\int_{B_k}\frac{|f(z)-f_{2B}|}{|x-z|^{n+1}}\,dz\\
&\lesssim t\sum_{k=1}^\infty(2^kr)^{-(n+1)}\int_{B_k}|f-f_{2B}|\,dz\\
&\lesssim t r^{-(1+\alpha)}\|f\|_{\mathcal{L}_{2,n-2\alpha}},
\end{align*}
and considering the limiting process of (\ref{eQ-1}) as $\alpha\to 1$ via the fact that $(1-\alpha)t^{-\alpha}dt$ converges weak-$\ast$ as $\alpha\to 1$ to the point-mass at $0$ but also $\int_{B(x,r)}|e^{t\Delta}f(y)|^2dy$ approaches $\int_{B(x,r)}|f(y)|^2dy$ as $t\to 0$, in Theorem \ref{t184}, (\ref{eQ-1}) and Definition \ref{d183} we can naturally define the limiting space $\lim_{\alpha\to 1}Q^{-1}_{\alpha}$ as the square Morrey space $L_{2,n-2}{}$ (cf. \cite{M}) - the class of all $L^2_{loc}{}$-functions $f$ with
\begin{equation}
\label{mo}
\|f\|_{L_{2,n-2}}=\sup_{(r,x)\in\mathbb R^{1+n}_+}\left(r^{2-n}\int_{B(x,r)}|f(y)|^2\,dy\right)^\frac12<\infty.
\end{equation}
In the light of (\ref{mo}) and a result on the Riesz operator $(-\Delta)^{-1/2}$ acting on the square Morrey space in \cite{A3}, we have
$$
\left\{\begin{array}{r@{}}
(-\Delta)^{-1/2}L_{2,n-2}\subseteq BMO;\\
L_{2,n-2}\subseteq BMO^{-1};\\
f_\lambda(x)=\lambda f(\lambda x)\ \ \forall\ \ (\lambda,x)\in\bn;\\
\|f_\lambda\|_{L_{2,n-2}}=\|f\|_{L_{2,n-2}}\ \ \forall\ \ \lambda\in (0,\infty).
\end{array}
\right.
$$
Here it is worth pointing out that $(-\Delta)^{-1/2}L_{2,n-2}{}$ is also affine invariant under the norm
$$
\|f\|_{(-\Delta)^{-1/2}L_{2,n-2}}=\|(-\Delta)^\frac12 f\|_{L_{2,n-2}}.
$$
To see this, note that 
$$
f\in (-\Delta)^{-1/2}L_{2,n-2}\Longleftrightarrow f(x)=\int_{\mathbb R^n}{g(y)}{|y-x|^{1-n}}\,dy\ \ \hbox{for\ some}\ \ g\in L_{2,n-2}.
$$
So, a simple computation gives
$$
\begin{cases}
f(\lambda x+x_0)=\int_{\mathbb R^n}{G_\lambda(y)}{|y-x|^{1-n}}\,dy\ \ \hbox{with}\\ G_\lambda(x)=\lambda g(\lambda x+x_0)\ \&\ \|G_\lambda\|_{L_{2,n-2}}=\|g\|_{L_{2,n-2}}.
\end{cases}
$$

The following assertion supports the above limiting process.

\begin{theorem}\label{p181} 
$
(-\Delta)^{-\frac12}L_{2,n-2}\subseteq\cap_{\alpha\in (0,1)}Q_\alpha\ \&\ L_{2,n-2}\subseteq\cap_{\alpha\in (0,1)}{Q}^{-1}_{\alpha}.
$
\end{theorem}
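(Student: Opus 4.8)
The plan is to verify the two displayed inclusions separately; the single tool needed for both is the elementary estimate $\int_{B(x,\rho)}|h|\,dy\lesssim\rho^{n-1}\|h\|_{L_{2,n-2}}$, valid for $h\in L_{2,n-2}$, which is the Cauchy--Schwarz inequality followed by \eqref{mo}. I will also use the recalled identity $Q_\alpha=(-\Delta)^{-\alpha/2}\mathcal L_{2,n-2\alpha}$ and the trivial bound $\inf_{c\in\mathbb C}\int_B|h-c|^2\le\int_B|h|^2$.

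\emph{First inclusion.} Fix $\alpha\in(0,1)$. Since $(-\Delta)^{\alpha/2}(-\Delta)^{-1/2}=(-\Delta)^{-(1-\alpha)/2}$ and $Q_\alpha=(-\Delta)^{-\alpha/2}\mathcal L_{2,n-2\alpha}$, it is enough to show that the Riesz potential $I_{1-\alpha}:=(-\Delta)^{-(1-\alpha)/2}$, with kernel $\propto|\cdot|^{1-\alpha-n}$ (well defined by an absolutely convergent integral, since $\alpha>0$ makes the Morrey growth of an $L_{2,n-2}$-function integrable against $|\cdot|^{1-\alpha-n}$ at infinity), carries $L_{2,n-2}$ into $\mathcal L_{2,n-2\alpha}$; indeed then $(-\Delta)^{-1/2}g=(-\Delta)^{-\alpha/2}(I_{1-\alpha}g)\in Q_\alpha$ for every $g\in L_{2,n-2}$. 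So fix $g\in L_{2,n-2}$ and a ball $B(x,r)$, and write $g=g_1+g_2$ with $g_1=g$ on $B(x,2r)$ and $g_1=0$ elsewhere. For the local term, the truncated kernel $|\cdot|^{1-\alpha-n}1_{B(0,3r)}$ lies in $L^1$ with norm $\approx(1-\alpha)^{-1}r^{1-\alpha}$, so Young's inequality together with $\|g_1\|_{L^2}^2\le(2r)^{n-2}\|g\|_{L_{2,n-2}}^2$ gives $\int_{B(x,r)}|I_{1-\alpha}g_1|^2\lesssim(1-\alpha)^{-2}r^{2(1-\alpha)}\|g_1\|_{L^2}^2\lesssim(1-\alpha)^{-2}r^{n-2\alpha}\|g\|_{L_{2,n-2}}^2$. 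For the tail term, when $y\in B(x,r)$ and $z\notin B(x,2r)$ one has $|y-z|\ge|x-z|/2$; decomposing $\{z:|z-x|\ge 2r\}$ into dyadic annuli and using the common bound yields $|I_{1-\alpha}g_2(y)|\lesssim\sum_{j\ge1}(2^jr)^{1-\alpha-n}(2^jr)^{n-1}\|g\|_{L_{2,n-2}}\approx r^{-\alpha}\|g\|_{L_{2,n-2}}$ (the series $\sum_{j\ge1}2^{-j\alpha}$ converges as $\alpha>0$), so $\int_{B(x,r)}|I_{1-\alpha}g_2|^2\lesssim r^{n-2\alpha}\|g\|_{L_{2,n-2}}^2$. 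Adding the two contributions, using $\inf_c\int|h-c|^2\le\int|h|^2$, and taking the supremum over $(r,x)$ gives $\|I_{1-\alpha}g\|_{\mathcal L_{2,n-2\alpha}}\lesssim\|g\|_{L_{2,n-2}}$.

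\emph{Second inclusion.} Fix $\alpha\in(0,1)$ and $f\in L_{2,n-2}$ and work directly with \eqref{eQ-1}. Given $(r,x)\in\mathbb R^{1+n}_+$, split $f=f_1+f_2$ with $f_1=f$ on $B(x,2r)$ and $f_1=0$ elsewhere. For the local term, the $L^2$-contractivity of $e^{t\Delta}$, the bound $\|f_1\|_{L^2}^2\le(2r)^{n-2}\|f\|_{L_{2,n-2}}^2$, and $\int_0^{r^2}t^{-\alpha}\,dt=(1-\alpha)^{-1}r^{2-2\alpha}$ yield $r^{2\alpha-n}\int_0^{r^2}\int_{B(x,r)}|e^{t\Delta}f_1|^2t^{-\alpha}\,dy\,dt\lesssim(1-\alpha)^{-1}\|f\|_{L_{2,n-2}}^2$. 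For the tail term, when $y\in B(x,r)$ and $w\notin B(x,2r)$ one has $|y-w|\ge|x-w|/2$, so the Gaussian pointwise decay of $e^{t\Delta}(\cdot,\cdot)$, a dyadic decomposition, and the common bound give $|e^{t\Delta}f_2(y)|\lesssim r^{-1}\|f\|_{L_{2,n-2}}$ uniformly for $0<t<r^2$ (the dyadic series converges super-geometrically thanks to the Gaussian factor), whence $r^{2\alpha-n}\int_0^{r^2}\int_{B(x,r)}|e^{t\Delta}f_2|^2t^{-\alpha}\,dy\,dt\lesssim(1-\alpha)^{-1}\|f\|_{L_{2,n-2}}^2$ as well. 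Summing and taking the supremum over $(r,x)$ gives $\|f\|_{Q_\alpha^{-1}}^2\lesssim(1-\alpha)^{-1}\|f\|_{L_{2,n-2}}^2<\infty$.

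There is no deep obstacle: both parts are routine near/far decompositions, and the hypotheses enter only through $\alpha\in(0,1)$ --- one needs $\alpha<1$ for $\int_0^{r^2}t^{-\alpha}\,dt<\infty$ and for $|\cdot|^{1-\alpha-n}1_{B(0,3r)}\in L^1$, and $\alpha>0$ for the convergence of the tail series $\sum_j 2^{-j\alpha}$ (equivalently, for $I_{1-\alpha}g$ to be given by an absolutely convergent integral) in the first inclusion. It is worth recording that the second bound degenerates like $(1-\alpha)^{-1}$ as $\alpha\to 1$, in keeping with the weak-$\ast$ convergence of $(1-\alpha)t^{-\alpha}\,dt$ to the point mass at $0$ that underlies the identification of the limiting space with $L_{2,n-2}$; and, alternatively, the second inclusion follows from the first by writing $f=\sum_{j=1}^n\partial_j\big(-(-\Delta)^{-1/2}R_jf\big)\in\nabla\cdot(Q_\alpha)^n=Q_\alpha^{-1}$ via Theorem \ref{t184}, once one invokes the boundedness of the Riesz transforms $R_j$ on the Morrey space $L_{2,n-2}$.
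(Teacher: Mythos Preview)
Your argument is correct, and for both inclusions you take a route different from the paper's.

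For the first inclusion the paper works directly with the $Q_\alpha$ double integral \eqref{eQ}: it performs a Littlewood--Paley decomposition of $f=(-\Delta)^{-1/2}g$, splits the resulting double sum at the scale of the cube, and then invokes external estimates from \cite{QLi} and \cite{Bar} to control the low- and high-frequency parts. Your reduction via $Q_\alpha=(-\Delta)^{-\alpha/2}\mathcal L_{2,n-2\alpha}$ to the mapping property $I_{1-\alpha}:L_{2,n-2}\to\mathcal L_{2,n-2\alpha}$, followed by a near/far split and Young's inequality, is more self-contained and elementary --- it avoids Littlewood--Paley theory and the cited lemmas entirely, at the cost of relying on the (recalled) identification $Q_\alpha=(-\Delta)^{-\alpha/2}\mathcal L_{2,n-2\alpha}$ and the semigroup law for Riesz potentials. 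One small point: you use $\inf_c\int_B|h-c|^2$ as a proxy for the $\mathcal L_{2,n-2\alpha}$ seminorm, which is defined in the paper via the double integral; the two are of course comparable, but it would not hurt to say so.

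For the second inclusion the paper's argument is even shorter than yours: it simply quotes the uniform boundedness $\sup_{t>0}\|e^{t\Delta}f\|_{L_{2,n-2}}\lesssim\|f\|_{L_{2,n-2}}$ and integrates $t^{-\alpha}$ over $(0,r^2)$. Your near/far split effectively re-proves that Morrey bound on the fly; it is correct but a bit heavier than needed. Your closing remark that the second inclusion also follows from the first via $\nabla\cdot(Q_\alpha)^n=Q_\alpha^{-1}$ and the Morrey-boundedness of the Riesz transforms is a nice observation the paper does not make.
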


\begin{proof} Given $\alpha\in (0,1)$. For $f\in (-\Delta)^{-\frac12}L_{2,n-2}\subseteq BMO$, $j\in\mathbb Z$ and a Schwartz function $\psi$, let  

$$
\begin{cases}
f=(-\Delta)^{-\frac12}g;\\
\psi_j(x)=2^{jn}\psi(2^jx);\\
\Delta_j(f)(x)=\psi_j\ast f(x);\\
\widehat{\Delta'_j(f)}(x)=|2^{j}x|^\alpha\hat{\psi}(2^{-j}x)\hat{f}(x);\\
\hbox{supp}\hat{\psi}\subset\{y\in\mathbb R^n:\ 2^{-1}\le|y|\le 2\};\\
\sum_{j}\hat{\psi}_j\equiv 1.
\end{cases}
$$
A simple computation gives that for any cube $I$ (whose edges are parallel to the coordinate axes) in $\mathbb R^n$ with side length $\ell(I)$,
\begin{equation}
\label{tt}
\ell(I)^{2\alpha-n}\iint_{I\times I}{|f(x)-f(y)|^2}{|x-y|^{-(n+2\alpha)}}\,dxdy\lesssim T_1(I)+T_2(I),
\end{equation}
where
$$
\begin{cases}
T_1(I)=\ell(I)^{2\alpha-n}\iint_{I\times I}{|\sum_{j<-\log_2\ell(I)}\Delta_j(f)(x)-\sum_{j<-\log_2 \ell(I)}\Delta_j(f)(y)|^2|}{|x-y|^{-(n+2\alpha)}}\,dxdy;\\
T_2(I)=\ell(I)^{2\alpha-n}\iint_{I\times I}{|\sum_{j\ge-\log_2\ell(I)}\Delta_j(f)(x)-\sum_{j\ge-\log_2 \ell(I)}\Delta_j(f)(y)|^2|}{|x-y|^{-(n+2\alpha)}}\,dxdy.
\end{cases}
$$
According to \cite[(3.2)]{QLi} and the last estimate for $\mathbb{IV}$ in \cite{QLi} as well as \cite[(22)]{Bar}, we get
\begin{equation}
\label{ttt}
\begin{cases}
\sup_I T_1(I)\lesssim\||f\||^2_{BMO}\sup_I\ell(I)^{2\alpha-n-2}\int_I\int_I|x-y|^{2-2\alpha-n}\,dxdy\lesssim\|g\|_{L_{2,n-2}}^2;\\
\sup_I T_2(I)\lesssim\sup_I \ell(I)^{2\alpha-n}\sum_{j\ge-\log_2\ell(I)}2^{2\alpha j}\|(-\Delta)^\frac12\Delta'_j g\|^2_{L^2(I)}\lesssim\|g\|^2_{L_{2,n-2}}.
\end{cases}
\end{equation}
Each $\sup_I$ in (\ref{ttt}) ranges over all cubes $I$ with edges being parallel to the coordinate axes. Thus, $f\in Q_\alpha$ follows from (\ref{tt}) and (\ref{ttt}) as well as (\ref{eQ}). This shows the first inclusion of Theorem \ref{p181}.

Next, suppose $f\in L_{2,n-2}$. Then the easily-verified uniform boundedness of the map $f\mapsto e^{t\Delta}f$ on $L_{2,n-2}{}$, i.e., 
$$
\sup_{t\in (0,\infty)}\|e^{t\Delta}f\|_{L_{2,n-2}{}}\lesssim \|f\|_{L_{2,n-2}},
$$
yields 
$$
r^{2\alpha-n}\int_0^{r^2}\Big(\int_{B(x,r)}|e^{s\Delta}f|^2\,dy\Big)\,{s^{-\alpha}ds}
\lesssim r^{2(\alpha-1)}\int_0^{r^2}\|e^{s\Delta}f\|^2_{L_{2,n-2}}\,s^{-\alpha}ds\lesssim\|f\|^2_{L_{2,n-2}},
$$
whence giving $f\in Q^{-1}_{\alpha}$ and verifying the second inclusion of Theorem \ref{p181}.

\end{proof}

\subsection{Navier-Stokes equations initiated in $(\lim_{\alpha\to 1}Q_\alpha^{-1})^n$}\label{s183b}

When applying $\nabla$ to $\big((-\Delta)^{-1/2}L_{2,n-2}\big)^n$ or $\big((-\Delta)^{-1/2}\lim_{\alpha\to 1}Q_\alpha^{-1}\big)^n$ (cf. Theorem \ref{t184}), we are suggested to consider $L_{2,n-2}$ in a further study of (\ref{e181}). To see this clearly, let us introduce the following definition.
 
\begin{definition}\label{d184} 

\item{\rm (i)} A function $f\in L_{2,n-2}{}$ is said to be in $VL_{2,n-2}{}$ provided that for any $\epsilon>0$ there is a $C^\infty_0{}$ function $h$ such that $\|f-h\|_{L_{2,n-2}{}}<\epsilon$, namely, $VL_{2,n-2}{}$ is the closure of $C^\infty_0{}$ in $L_{2,n-2}{}$.

\item{\rm (ii)} Given $T\in (0,\infty]$, a function $g\in L^2_{loc}((0,T)\times\rn)$ is said to be in 
$X_{2,n-2;T}$ provided
$$
\|g\|_{X_{2,n-2;T}}=\sup_{t\in
(0,T)}{t}^\frac12\|g(t,\cdot)\|_{L^\infty}+\sup_{t\in (0,T)}\|g(t,\cdot)\|_{L_{2,n-2}}<\infty.
$$
\end{definition}

Related to Theorem \ref{p181} is the following inclusion
$X_{2,n-2;T}\subseteq\cap_{\alpha\in (0,1)}{X}_{\alpha;T}$ which follows from
$$
\int_0^{r^2}\int_{B(x,r)}|g(t,y)|^2\,t^{-\alpha}dydt\lesssim r^{n-2}\|g(t,\cdot)\|^2_{L_{2,n-2}}\int_0^{r^2}t^{-\alpha}\,dt\lesssim r^{n-2\alpha}.
$$

As a limiting case $\alpha\to 1$ of Theorem \ref{t183}, we have the following generalization of the 3D result \cite[Theorem 1 (A)-(B)]{Lem1} (cf. \cite{Ka1, Lem2}) on the existence of a mild solution to (\ref{e181}) under
$a=(a_1,...,a_n)\in (L_{2,n-2}{})^n$ and
$\|a\|_{(L_{2,n-2}{})^n}=\sum_{j=1}^n\|a_j\|_{L_{2,n-2}{}}.$
 
 \begin{theorem}\label{t183a}

\item {\rm (i)} (\ref{e181}) has a small
global mild solution $u$ in $(X_{2,n-2;\infty})^n$ for all initial data
$a=(a_1,...,a_n)$ with $\nabla\cdot a=0$ and $\|a\|_{(L_{2,n-2}{})^n}$ being small.

\item{\rm (ii)} For any
$a=(a_1,...,a_n)\in \big({VL_{2,n-2}}{}\big)^n$ with $\nabla\cdot a=0$
there exists a $T>0$ depending on $a$ such that (\ref{e181}) has a small local mild solution $u$ in $C\big([0,T],(L_{2,n-2})^n\big)$.
\end{theorem}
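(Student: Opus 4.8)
\textbf{Proof proposal for Theorem \ref{t183a}.}

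The plan is to mimic the Picard contraction argument used for Theorem \ref{t183}, but now carried out in the space $(X_{2,n-2;T})^n$ in place of $(X_{\alpha;T})^n$; the role previously played by the interior Carleson-type control at scale $r^{2\alpha-n}\int_0^{r^2}\int_{B(x,r)}|\cdot|^2 t^{-\alpha}\,dy\,dt$ is now played by the simpler, $t$-pointwise Morrey norm $\sup_{t\in(0,T)}\|\cdot\|_{L_{2,n-2}}$. The first step is to record the linear estimate: using the uniform boundedness $\sup_{t>0}\|e^{t\Delta}f\|_{L_{2,n-2}}\lesssim\|f\|_{L_{2,n-2}}$ (already established in the proof of Theorem \ref{p181}) together with $t^{1/2}\|e^{t\Delta}f\|_{L^\infty}\lesssim\|f\|_{L_{2,n-2}}$ (which follows from $L_{2,n-2}\subseteq BMO^{-1}$ and the kernel estimate for $e^{t\Delta}$ on $BMO^{-1}$ invoked in Theorem \ref{t183}), one gets $\|e^{t\Delta}a\|_{(X_{2,n-2;\infty})^n}\lesssim\|a\|_{(L_{2,n-2})^n}$. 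By the Picard principle (as cited, \cite[p.145, Theorem 15.1]{Lem}), both (i) and (ii) reduce to showing that the bilinear operator $\mathsf B(u,v;t)=\int_0^t e^{(t-s)\Delta}P\nabla\cdot(u\otimes v)\,ds$ is bounded $(X_{2,n-2;T})^n\times(X_{2,n-2;T})^n\to(X_{2,n-2;T})^n$, with norm independent of $T$ when $T=\infty$.

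The $L^\infty$-bound $|\mathsf B(u,v;t)|\lesssim t^{-1/2}\|u\|\|v\|$ is obtained exactly as in Step 1 of the proof of Theorem \ref{t183}: split the $s$-integral at $t/2$, use $\|e^{(t-s)\Delta}P\nabla\cdot(u\otimes v)\|_{L^\infty}\lesssim(t-s)^{-1/2}\|u\|_{L^\infty}\|v\|_{L^\infty}$ on the upper half, and on the lower half use the kernel bound $|e^{(t-s)\Delta}P\nabla\cdot F(z)|\lesssim\int(t^{1/2}+|x-z|)^{-(n+1)}|F(z)|\,dz$ combined with a dyadic decomposition of $\rn$ into cubes of size $t^{1/2}$; the Cauchy–Schwarz estimate $\int_0^t\int_{x-z\in t^{1/2}(k+[0,1]^n)}|u||v|\,dz\,ds\lesssim t^{n/2}\|u\|\|v\|$ is now a direct consequence of the Morrey control $\|u(s,\cdot)\|_{L_{2,n-2}}\lesssim\|u\|_{X_{2,n-2;T}}$ (each such cube is comparable to a ball of radius $\approx t^{1/2}$). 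The main work is the Morrey-type bound for $\mathsf B$, namely $\sup_{t\in(0,T)}\|\mathsf B(u,v;t)\|_{L_{2,n-2}}\lesssim\|u\|\|v\|$; here I would follow the localization used in Step 2 of Theorem \ref{t183}, writing $\mathsf B=\mathsf B_1-\mathsf B_2-\mathsf B_3$ with $\mathsf B_1$ the far-field piece (integrand supported outside $B(x,10r)$), $\mathsf B_2$ the piece built from $(-\Delta)^{-1/2}(I-e^{h\Delta})$ applied to the localized $u\otimes v$, and $\mathsf B_3$ the piece $(-\Delta)^{-1/2}P\nabla\cdot(-\Delta)^{1/2}e^{s\Delta}\int_0^s(1_{r,x}u\otimes v)\,dh$. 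For $\mathsf B_1$ one uses the same decay-in-$|x-z|$ kernel estimate and the Morrey bound to get $|\mathsf B_1|\lesssim r^{-1}\|u\|\|v\|$, hence $\int_{B(x,r)}|\mathsf B_1|^2\,dy\lesssim r^{n-2}\|u\|^2\|v\|^2$. For $\mathsf B_2$ one uses $L^2$-boundedness of the Riesz transforms, Lemma \ref{l181} with the parameter value $\alpha=0$ (which is permitted, as Lemma \ref{l181} is stated for $\alpha\in[0,1)$), the operator-norm bound $\|(-\Delta)^{-1/2}(I-e^{h\Delta})\|_{L^2\to L^2}\lesssim h^{1/2}$, and the fact that $1_{r,x}(u\otimes v)$ has $L^2$-norm in $y$ controlled by $r^{n/2-1}\cdot r^{n/2-1}\cdot r^{\,?}$ — more precisely $\int_{B(x,10r)}|u|^2|v|^2\,dy\le\|u(h,\cdot)\|_{L^\infty}^2\int_{B(x,10r)}|v|^2\,dy\lesssim h^{-1}r^{n-2}\|u\|^2\|v\|^2$, which after integration in $h\in(0,r^2)$ yields $\int_0^{r^2}\|\mathsf B_2\|_{L^2(B(x,r))}^2\,dt\lesssim r^{n-2}\|u\|^2\|v\|^2$; dividing by $r^2$ and taking $\sup$ in the spatial variable gives the Morrey bound after observing that $\sup_{t<r^2}\|\mathsf B_2(t)\|_{L_{2,n-2}}^2$ is controlled by $r^{-2}$ times such a Carleson quantity (here one should note that $\mathsf B_2(t,\cdot)$ is supported, up to tails handled as in $\mathsf B_1$, near $B(x,10r)$, so the $L^2(B(x,r))$ norm and the relevant local Morrey norm are comparable). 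For $\mathsf B_3$ the same rescaling $y\mapsto x+rw$, $s\mapsto r^2\tau$ reduces matters to Lemma \ref{l182} with $\alpha=0$, and the two auxiliary estimates $\mathsf K(M;0)\lesssim r^{-2}\|u\|\|v\|$ and $\int_0^1\|\mathsf M(r^2\theta,r\cdot)\|_{L^1}\,d\theta\lesssim r^{-2}\|u\|\|v\|$, which again come straight from the Morrey control of $u$ and $v$ together with Cauchy–Schwarz; combining gives $\int_0^{r^2}\|\mathsf B_3\|_{L^2(B(x,r))}^2\,dt\lesssim r^{n-2}\|u\|^2\|v\|^2$, hence the desired Morrey bound.

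Assembling the three pieces gives $\sup_{t\in(0,T)}\|\mathsf B(u,v;t)\|_{L_{2,n-2}}\lesssim\|u\|_{(X_{2,n-2;T})^n}\|v\|_{(X_{2,n-2;T})^n}$, and with Step 1 this establishes the boundedness of $\mathsf B$; taking $T=\infty$ and $\|a\|_{(L_{2,n-2})^n}$ small yields (i). For (ii) one argues as in the $\overline{VQ}^{-1}_\alpha$ case: if $a\in(VL_{2,n-2})^n$, approximate $a$ in $(L_{2,n-2})^n$ by $C_0^\infty$ data and use that for smooth compactly supported $a$ the linear solution $e^{t\Delta}a$ has $\sup_{0<t<T}(t^{1/2}\|e^{t\Delta}a\|_{L^\infty}+\|e^{t\Delta}a\|_{L_{2,n-2}})\to 0$ as $T\to 0$ (indeed $\|e^{t\Delta}a\|_{L^\infty}$ stays bounded so the $t^{1/2}$ factor kills the first term, and $e^{t\Delta}a\to a$ with $\|e^{t\Delta}a\|_{L_{2,n-2}}$ supported effectively on a fixed compact set so the $\sup$ over small $T$ is small once one also exploits that the Morrey norm of the error $a-h$ can be made small); hence for $T$ small enough the Picard iteration converges in $(X_{2,n-2;T})^n$, and the continuity in time with values in $(L_{2,n-2})^n$ (i.e.\ membership in $C([0,T],(L_{2,n-2})^n)$) follows from the strong continuity of the heat semigroup on the closure $VL_{2,n-2}$ together with the Morrey bound on $\mathsf B$, which shows the Duhamel term is continuous in $t$ with values in $L_{2,n-2}$ and vanishes at $t=0$.

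I expect the main obstacle to be the passage, for $\mathsf B_2$ and $\mathsf B_3$, from the Carleson-type space–time integrals $\int_0^{r^2}\|\mathsf B_j(t)\|_{L^2(B(x,r))}^2\,dt\lesssim r^{n-2}\|u\|^2\|v\|^2$ to the required $t$-pointwise Morrey bound $\sup_{t<T}\|\mathsf B_j(t)\|_{L_{2,n-2}}\lesssim\|u\|\|v\|$; this is exactly the place where the limiting procedure $\alpha\to 1$ is doing real work (the measure $(1-\alpha)t^{-\alpha}\,dt$ concentrating at $0$), and one has to argue that the Duhamel integrand $\mathsf B_j(t,\cdot)$ is, at each fixed small $t$, already controlled in $L_{2,n-2}$ — for $\mathsf B_3$ this is cleanest because the outer $e^{s\Delta}$ regularizes and one can differentiate under the integral, while for $\mathsf B_2$ one uses the factor $(I-e^{h\Delta})$ to gain the missing power. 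A safe route, if the direct pointwise bound is delicate, is to run the whole contraction in the hybrid norm $\|g\|=\sup_{t<T}t^{1/2}\|g(t)\|_{L^\infty}+\sup_{(r,x)}(r^{2-n}\int_0^{r^2}\|g(t)\|_{L^2(B(x,r))}^2\,dt)^{1/2}$, note that this norm is comparable to $\|g\|_{X_{2,n-2;T}}$ on the range of the solution map because $e^{t\Delta}$ is strongly continuous on $VL_{2,n-2}$ and the bilinear term lands in $C([0,T],L_{2,n-2})$, and then transfer back.
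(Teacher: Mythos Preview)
Your plan has a genuine structural gap, and the paper in fact takes a different route precisely to avoid it.

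The decisive issue is part (ii). You claim that for $a\in C^\infty_0$ one has $\sup_{0<t<T}\|e^{t\Delta}a\|_{L_{2,n-2}}\to 0$ as $T\to 0$. This is false: $e^{t\Delta}a\to a$ in $L_{2,n-2}$ as $t\to 0$, so the supremum tends to $\|a\|_{L_{2,n-2}}\neq 0$. Since the Morrey component of $\|\cdot\|_{X_{2,n-2;T}}$ carries no time weight, the linear datum $e^{t\Delta}a$ is never small in $(X_{2,n-2;T})^n$ for small $T$, and the Picard contraction run directly in $(X_{2,n-2;T})^n$ cannot produce a local-in-time result. This is exactly why the paper does \emph{not} iterate in $X_{2,n-2;T}$: it introduces the intermediate space
\[
\|g\|_{X_{4,2;T}}=\sup_{0<t<T}t^{1/2}\|g(t,\cdot)\|_{L^\infty}+\sup_{0<t<T}t^{1/4}\|g(t,\cdot)\|_{L_{4,n-2}},
\]
whose second term carries the weight $t^{1/4}$; for $f\in VL_{2,n-2}$ one then has $\lim_{T\to 0}\|e^{t\Delta}f\|_{X_{4,2;T}}=0$, and the bilinear operator is bounded on $(X_{4,2;T})^n$ via the pointwise-in-$t$ estimates (cited from \cite{Lem1})
\[
\|e^{(t-s)\Delta}P\nabla\cdot(u\otimes v)\|_{L_{4,n-2}}\lesssim (t-s)^{-1/2}s^{-3/4}\bigl(s^{1/4}\|u\|_{L_{4,n-2}}\bigr)\bigl(s^{1/2}\|v\|_{L^\infty}\bigr),
\]
together with the companion $L^\infty$ bound. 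The contraction is carried out entirely in $X_{4,2;T}$; no Carleson-type control or analogue of Lemmas \ref{l181}--\ref{l182} enters.

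A second, related gap is the one you yourself flag: for $\mathsf B_2$ and $\mathsf B_3$, the machinery of Lemmas \ref{l181}--\ref{l182} (even at $\alpha=0$) only yields time-averaged quantities of the form $\int_0^{r^2}\|\mathsf B_j(t)\|_{L^2(B(x,r))}^2\,dt\lesssim r^{n-2}$, and there is no legitimate passage from this to the required $t$-pointwise bound $\sup_t\|\mathsf B_j(t)\|_{L_{2,n-2}}<\infty$. Your proposed ``hybrid norm'' fix does not close this: running the contraction in the Carleson norm would only place the solution in $(X_{0;T})^n$, and the transfer back to $X_{2,n-2;T}$ is exactly the missing step. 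If you want to stay in $X_{2,n-2;T}$ for part (i), the cleaner route is to abandon the $\mathsf B_1$--$\mathsf B_2$--$\mathsf B_3$ splitting altogether and use directly that $P\nabla\cdot e^{(t-s)\Delta}$ maps $L_{2,n-2}\to L_{2,n-2}$ with norm $\lesssim(t-s)^{-1/2}$ (Riesz transforms are bounded on Morrey spaces), combined with $\|u\otimes v\|_{L_{2,n-2}}\le\|u\|_{L^\infty}\|v\|_{L_{2,n-2}}\lesssim s^{-1/2}$, so that $\int_0^t(t-s)^{-1/2}s^{-1/2}\,ds=\pi$ gives the bilinear Morrey bound pointwise in $t$. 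But even with this repair, part (ii) still requires the time-weighted intermediate space as in the paper.
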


\begin{proof} To prove this assertion, for $T\in (0,\infty]$ let us introduce the following middle space $X_{4,2;T}$ of all functions $g$ on $\mathbb R^{1+n}_+$ with
$$
\|g\|_{X_{4,2;T}}=\sup_{t\in
(0,T)}{t}^\frac12\|g(t,\cdot)\|_{L^\infty}+\sup_{t\in (0,T)}t^\frac14\|g(t,\cdot)\|_{L_{4,n-2}}<\infty,
$$
where 
$$
\|g(t,\cdot)\|_{L_{4,n-2}}=\left(\sup_{(r,x)\in\mathbb R^{1+n}_+}r^{2-n}\int_{B(x,r)}|g(t,y)|^4\,dy\right)^\frac14.
$$

Note that the following estimate for $f\in L_{2,n-2}{}$ (cf. \cite[Theorem 18.1]{Lem}):
\begin{equation}
\label{ete}
|e^{t\Delta}f(x)|\lesssim\sum_{k\in\mathbb Z^n}\sup_{z\in k+[0,1]^n}
\exp\Big(-\frac{|z|^2}{4}\Big)\int_{k+[0,1]^n}|f(x-{t}^\frac12 y)|\,dy\quad\forall\quad (t,x)\in\bn,
\end{equation}
along with the Cauchy-Schwarz inequality, deduces
${t}^\frac12\|e^{t\Delta}f\|_{L^\infty{}}\lesssim\|f\|_{L_{2,n-2}}.$
So, (\ref{ete}), plus the uniform boundedness of the map $f\mapsto e^{t\Delta}f$ on $L_{2,n-2}{}$,
gives
$$
\|e^{t\Delta}f\|_{L_{4,n-2}}\lesssim\|e^{t\Delta}f\|^{\frac12}_{L^\infty}\|e^{t\Delta}f\|^{\frac12}_{L_{2,n-2}}\lesssim t^{-\frac14}\|f\|_{L_{2,n-2}{}}.
$$
Thus
$$
\left\{\begin{array}{r@{}}
\|e^{t\Delta}f(x)\|_{X_{4,2;T}}\lesssim 
\|f\|_{L_{2,n-2}};\\
\lim_{T\to 0}\|e^{t\Delta}f(x)\|_{X_{4,2;T}}=0\quad\hbox{as}\quad f\in VL_{2,n-2}.
\end{array} 
\right.
$$

Keeping the previous preparation and the Picard
contraction principle in mind, we find that showing Theorem \ref{t184}, via the integral equation (\ref{eIe}) and the iteration process
$$
\left\{\begin{array}{r@{}}
u^{(0)}(t,\cdot)=e^{t\Delta}a(\cdot);\\
u^{(j+1)}(t,\cdot)=u^{(0)}(t,\cdot)-\mathsf B\big(u^{(j)}(t,\cdot),u^{(j)}(t,\cdot),t\big);\\
j=0,1,2,3,....,
\end{array}
\right.
$$
amounts to proving the boundedness of the bilinear operator
$\mathsf B(\cdot,\cdot,t):\ (X_{4,2;T})^n\times(X_{4,2;T})^n\mapsto (X_{4,2;T})^n$.
However, this boundedness follows directly from the following estimates (cf. \cite[(25)-(24)]{Lem1}) for $0<s<t<T$:

$$
\left\{\begin{array}{r@{}}
\frac{\|e^{(t-s)\Delta}P\nabla\cdot(u\otimes v)\|_{(L^\infty{})^n}}{(t-s)^{-\frac12}}
\lesssim\min\Big\{\big(s(t-s)\big)^\frac12{s^{\frac14}\|u\|_{(L_{4,n-2}{})^n}s^{\frac14}\|v\|_{(L_{4,n-2}{})^n}}{\big({s(t-s)}\big)^\frac12},\, {s^{-1}{s}^\frac12\|u\|_{(L^\infty{})^n}{s}^\frac12
\|v\|_{(L^\infty{})^n}}\Big\};\\
\frac{\|e^{(t-s)\Delta}P\nabla\cdot(u\otimes v)\|_{(L_{4,n-2}{})^n}}{(t-s)^{-\frac12}}\lesssim s^{-\frac34}{\big(s^{\frac14}\|u\|_{(L_{4,n-2}{})^n}\big) \big(s^{\frac12}\|v\|_{(L^{\infty}{})^n}\big)}.
\end{array}
\right.
$$
\end{proof}

\begin{remark}\label{c181} Though Theorem \ref{t183} can be used to derive that if $\|a\|_{(L_{2,n-2}{})^n}$ is sufficiently small then there is a unique solution $u$ of (\ref{e181}) in $\big({X}_{\alpha;\infty}\big)^n$, Theorem \ref{t183} cannot guarantee  $u\in ({X}_{2,n-2,\infty})^n$ due to $X_{2,n-2,\infty}\subseteq\cap_{0<\alpha<1}X_{\alpha;\infty}$. In any event, we always have 
$
\sup_{t\in (0,\infty)}{t}^\frac12\|u(t,\cdot)\|_{L^\infty{}}<\infty
$
and even more general estimate (cf. \cite[(49) \& Lemma 3]{Lem1}): 
$
\sup_{t\in (0,\infty)}{t}^\frac12\|u^{(j+1)}(t,\cdot)-u^{(j)}(t,\cdot)\|_{L^\infty{}}\lesssim (j+1)^{-2}.
$
\end{remark}

\frenchspacing

\end{document}